\documentclass[12pt]{article}

\usepackage[latin1]{inputenc}
\usepackage[T1]{fontenc}

\usepackage{authblk} 

\usepackage{amsmath}
\usepackage{amsfonts}
\usepackage{amssymb}

\usepackage[normalem]{ulem} 

\usepackage{amsmath,xcolor,ulem}
\usepackage{amsfonts}
\usepackage{amssymb,todonotes}
\usepackage{amsthm}
\usepackage{graphicx}
\usepackage{marginnote}

\usepackage[colorlinks=true, allcolors=blue]{hyperref}

\usepackage[top=2.5cm,bottom=2.5cm,left=2.5cm,right=2.5cm]{geometry}
\usepackage{float}
\usepackage[algoruled]{algorithm2e}
\usepackage[font=footnotesize,width=\linewidth]{caption} 

\usepackage{graphicx}
\usepackage{amssymb}
\usepackage{amsthm}
\usepackage{amsmath}
\usepackage{lineno}
\usepackage{hyperref}
\usepackage{verbatim}

\newtheorem{theorem}{Theorem}[section]
\newtheorem{lemma}[theorem]{Lemma}
\theoremstyle{definition}

\newtheorem{example}[theorem]{Example}

\theoremstyle{remark}
\newtheorem{remark}[theorem]{Remark}

\numberwithin{equation}{section}

\usepackage{graphicx, pdflscape}
\usepackage{amsmath, hyperref}
 \usepackage{amssymb} 
\usepackage{relsize, float, lineno}

 \usepackage{xcolor} 
\usepackage{subcaption}
\allowdisplaybreaks

\title{A simple second-order nonstandard numerical method for a general class of dynamical systems and\\ its applications}

\author{Manh Tuan Hoang\footnote{Email(s): \href{mailto:tuanhm16@fe.edu.vn}{tuanhm16@fe.edu.vn}; \href{mailto:hmtuan01121990@gmail.com}{hmtuan01121990@gmail.com}}}

\affil{Department of Mathematics, FPT University, Hoa Lac Hi-Tech Park, \\ Km29 Thang Long Blvd, Hanoi, Viet Nam}

\begin{document}
\maketitle


\begin{abstract}
In this work,  we consider a class of continuous-time autonomous dynamical systems that model various important phenomena and processes encountered in real-world situations. We construct a second-order nonstandard finite difference (NSFD) method that simultaneously preserves two essential properties of the dynamical systems for all finite step sizes, namely the positivity of the solutions, the set of equilibrium points and their asymptotic stability. This NSFD method is constructed based on an appropriate choice of nonstandard denominator functions and a weighted discretization of the right-hand side functions. Under easily-verified conditions, the denominator functions guarantee second-order convergence, whereas the weights ensure the dynamic consistency. By taking advantage of the specific structure of the right-hand side functions, a simple discretization is utilized instead of the nonlocal discretization approaches commonly used in previous works. This simplifies the construction of the proposed NSFD method and, in particular, makes its asymptotic stability analysis easier.

As an illustration and an important application, we apply the constructed second-order NSFD method to a well-known two-stage structured species model with recruitment. Consequently, a simple second-order NSFD scheme for the considered two-stage structured species model is derived, improving upon a first-order NSFD scheme constructed in a previous work. Numerical experiments demonstrate the advantages of the second-order NSFD scheme over a standard second-order numerical method, namely, the explicit trapezoidal method.

The proposed NSFD method is simple and can be applied to a broad class of dynamical system models arising in both theory and applications.  Moreover, it can be readily combined with the Richardson extrapolation technique to improve its accuracy.
\end{abstract}

\begin{minipage}{0.9\linewidth}
 \footnotesize
\textbf{AMS classification:} 65L05, 65Z05.

\medskip

\noindent
\textbf{Keywords:} 
Nonstandard finite difference, Second-order, Two-stage structured species, Positivity, Asymptotic stability

\end{minipage}

\section{Introduction}\label{intro}
We begin by considering a general dynamical system governed by ordinary differential equations (ODEs) of the form:
\begin{equation}\label{eq:DYN}
\dfrac{du(t)}{dt} = f(u(t)), \quad t \geq 0, \quad u(0) = u_0 \in \mathbb{R}^n,
\end{equation}
where $u$ is an $n$-component vector-valued function of $t$; $f = (f_1, f_2, \ldots,f_n)^T$ is a function of $u$ and it is assumed to satisfy suitable conditions ensuring the existence and uniqueness of solutions to the model \eqref{eq:DYN} (see, e.g., \cite{Allen, Khalil, Smith, Stuart}). In this work, we investigate \eqref{eq:DYN} under the following two assumptions, which commonly arise in a wide range of important mathematical models:
\begin{itemize}
\item[(\textbf{A1}):] There exists positive real number $c_i$ such that
\begin{equation}\label{eq:2new}
f_i(u) + c_iu_i \geq 0 \quad \mbox{for all} \quad u \geq 0.
\end{equation}
Here, the symbol '$\geq$' is understood in the componentwise (entry-wise) sense for vectors.
\item[(\textbf{A2}):] The equilibrium set is finite, and each equilibrium point is hyperbolic.
\end{itemize}
As a direct consequence of (\textbf{A1}), we conclude that \eqref{eq:DYN} admits the positive orthant $\mathbb{R}^n_+ = \big\{u \in \mathbb{R}^n| u \geq 0\big\}$ as a positively invariant set, that is, $u(t) \geq 0$ if $u_0 \geq 0$ (see \cite{Horvath, Smith}). Meanwhile,  (\textbf{A2}) implies that the (local) asymptotic stability of all the equilibria can be determined by the linearized method with the help of Routh-Hurwitz criteria \cite{Allen, Gantmacher}, that is, by examining the location of the eigenvalues of the Jacobian matrix evaluated at each equilibrium in relation to the left half of the complex plane \cite{Allen, Khalil, Stuart}. Specifically, we have
\begin{itemize}
\item[(i)] An equilibrium point $u^*$ is asymptotically stable if $Re(\lambda) < 0$ for all $\lambda \in \sigma(J(u^*))$, where $J(u^*) = \frac{\partial f}{\partial u}(u^*)$ and $\sigma(J)$ stands for the set of eigenvalues of $J$;
\item[(ii)] An equilibrium point $u^*$ is unstable if $Re(\lambda) > 0$ for some $\lambda \in \sigma(J(u^*))$.
\end{itemize}
It is easy to find several important mathematical models in biology, ecology and epidemiology, which satisfy (\textbf{A1}) and (\textbf{A2}), for instance:
\begin{itemize}
\item mathematical models in biology, epidemiology and chemostat \cite{Allen, Brauer, Martcheva, Smith};
\item a predator-prey model with linear prey growth and Beddington-DeAngelis functional response \cite{Dimitrov4};
\item vaccination models with multiple endemic states and  with non-linear incidence \cite{Gumel1, Kribs-Zaleta};
\item epidemic models with generalized non-linear incidence \cite{Capasso, Moghadas};
\item a mathematical model of Zika virus transmission \cite{Maamar};
\item an extended nonlinear three-compartmental model of ethanol metabolism in the human body \cite{Wacker}.
\end{itemize}
In \cite{Hoang2023}, a generalized NSFD method for dynamical system models satisfying (\textbf{A1}) and (\textbf{A2}) was proposed and analyzed. This method is based on a nonlocal approximation using weights for the right-hand side functions of the dynamical systems. It was rigorously established that the NSFD method is dynamically consistent with respect to the positivity, asymptotic stability, and three classes of conservation laws, namely, direct, generalized, and sub-conservation laws. However, the proposed NSFD method is only convergent of order $1$.

In recent years, the problem of constructing higher-order NSFD methods for differential equations has attracted considerable attention from researchers aiming to resolve the conflict between the dynamic consistency and the higher-order accuracy (see, for instance, see \cite{Alalhareth1, Alalhareth2, Hoang1, Hoang2, Hoang3, HoangMatthias1, HoangMatthias2, Kojouharov1} and references therein). These NSFD schemes are constructed based on extending Mickens' methodology \cite{Mickens1, Mickens2, Mickens3, Mickens4, Mickens5}, which employs nonlocal approximations of the right-hand side functions in combination with the renormalization of the denominator functions. Before, a class of second-order NSFD methods for ODEs with polynomial right-hand sides was introduced in \cite{Chen-Charpentier}; higher-order NSFD schemes using extrapolation techniques and variable step length algorithms for MSEIR and  SEIR epidemic models for malware propagation were formulated in \cite{Martin-Vaquero1, Martin-Vaquero2}. Another study combining NSFD schemes with Richardson extrapolation technique to improve the numerical solution of some population models can be found in an early work \cite{GParra}. In \cite{DangHoang}, explicit nonstandard Runge-Kutta methods, which have higher accuracy order and preserve the positivity and asymptotic stability of a class of autonomous dynamical systems, have been constructed based on the positivity of the Runge-Kutta methods. In recent work \cite{HoangMatthias2026},  a generalized, second-order, NSFD method for non-autonomous dynamical systems ODEs has been constructed. This method combines the NSFD framework with a new non-local approximation of the right-hand side function. It is worth nothing that the constructed NSFD methods avoids a restrictive and indispensable condition required by many existing positivity-preserving, second-order NSFD methods. Besides, an insight on some properties of high-order nonstandard linear multistep methods has been analyzed in \cite{Takacs}, whereas a general procedure to obtain unconditionally positive second-order NSFD methods has been provided in \cite{Conte}. It is safe to say that the construction of higher-order NSFD schemes is not a trivial problem, and the schemes developed mainly depend on the ODE models under consideration.

Our main objective in this work is to construct a simple second-order NSFD method for the dynamical system models of the form \eqref{eq:DYN}, which satisfy (\textbf{A1}) and (\textbf{A2}). Based on the approach proposed in \cite{HoangMatthias2}, we can drive a second-order NSFD scheme for \eqref{eq:DYN} in the form
\begin{equation}\label{eq:NSFDDYN}
\dfrac{u_i^{k + 1} - u_i^k}{\phi_i(h, u^k)} = P_i(u^k) - u_i^{k + 1}Q_i(u_k) + \tau_i u_i^k - \tau_i u_i^{k+1},
\end{equation}
where
\begin{itemize}
\item $u^k = \big(u_1^k,\, u_2^k,\,\ldots,\,u_n^k\big)^T$ is the intended approximation for $u(t_k) = \big(u_1(t_k),\, u_2(t_k),\,\ldots,\,u_n(t_k)\big)^T$ with $k = 1, 2, \ldots, N$;
\item $h = \dfrac{T}{N}$ ($N > 0$) is the step size;
\item $P_i$ and $Q_i$ ($i = 1,2,\,\ldots,\,n$) can considered as the positive and negative parts of $f_i$ and satisfy $P_i(u), Q_i(u) \geq 0$ and $P_i(u) - u_iQ_i(u) = f_i(u)$ for $u \geq 0$;
\item $\phi_i(h, u)$ are called a denominator function with the property that $0 < \phi_i(h, u) = h + \mathcal{O}(h^2)$ as $h \to 0$.
\item $\tau_i$ for $i = 1, 2, \ldots, n$ are positive real numbers, which play a role as weights.
\end{itemize}
The denominator functions $\phi_i(h, x, y)$ $(i = 1, 2)$ are chosen so that \eqref{eq:NSFDDYN} is convergent of order $2$, meanwhile, the weights $\tau_i$ ensure dynamic consistency. However, the stability analysis of \eqref{eq:NSFDDYN} becomes challenging because its complex structure. Therefore, we aim to construct a new NSFD method with a simpler structure. For this purpose, we adopt the approach in \cite{Hoang2023} to propose the following scheme:
\begin{equation}\label{eq:NSFDDYN1}
\dfrac{u_i^{k + 1} - u_i^k}{\phi_i(h, u^k)} = f_i(u^k) + \tau_i u_i^k - \tau_i u_i^{k+1},
\end{equation}
where $\tau_i$ for $i = 1, 2, \ldots, n$ are positive weights.

Note that if $\tau_i = 0$, we derive from \eqref{eq:NSFDDYN1} the nonstandard explicit Euler scheme, which was considered in \cite{Dimitrov, Kojouharov1}. Also, it should be emphasized that \eqref{eq:NSFDDYN1} has a simpler structure than \eqref{eq:NSFDDYN} because it does not rely on nonlocal discretizations of the right-hand side functions as was done in \cite{Alalhareth1, Alalhareth2, Conte, Hoang1, Hoang2, Hoang3, HoangMatthias1, HoangMatthias2, HoangMatthias2026}. This makes the mathematical analysis of \eqref{eq:NSFDDYN1} easier.  Also, \eqref{eq:NSFDDYN1} can be readily combined with the Richardson extrapolation technique \cite{Burden, Richardson, Roy} to improve its accuracy.

Through rigorous mathematical analysis, we establish suitable conditions imposed on $\tau_i$ and $\phi_i(h)$ ($i =1, 2, \ldots, n$), which ensure that \eqref{eq:NSFDDYN1} is second-order convergent as well as preserves the positivity and asymptotic stability of \eqref{eq:1} for all values of the step size.   


As an illustration and an important application, we consider a well-known two-stage structured species model with recruitment, which was first constructed in \cite{Ladino} and represented by
\begin{equation}\label{eq:1}
\begin{split}
\dfrac{dx(t)}{dt} &= \delta y(t)-\dfrac{\alpha x(t)}{\beta + x(t)}-\mu x(t) := f_1(x(t),\,y(t)),\\
\dfrac{dy(t)}{dt} &= \dfrac{\alpha x(t)}{\beta + x(t)}-(\mu+F) y(t) := f_2(x(t),\,y(t))
\end{split}
\end{equation}
subject to initial data $x(0), y(0) \geq 0$. In the model \eqref{eq:1}:
\begin{itemize}
\item the total fish population $N(t)$ is partitioned into two subpopulations: the pre-recruit group $x(t)$, consisting of eggs, larvae, and juveniles, and the exploitable group $y(t)$, consisting of adult fish;
\item all parameters are assumed to be positive in accordance with the biological characteristics of the system.
\end{itemize}
Further details of this model and its qualitative dynamics were fully characterized in the benchmark work of Ladino and Valverde \cite{Ladino}. In \cite{HoangValverde}, Hoang and Valverde extended Mickens' methodology \cite{Mickens1, Mickens2, Mickens3, Mickens4, Mickens5} to construct a first-order NSFD model for approximating the solutions of \eqref{eq:1}.

By applying the constructed second-order NSFD method  \eqref{eq:NSFDDYN1}, we drive a simple second-order NSFD scheme, which improves upon a first-order NSFD scheme constructed in \cite{HoangValverde}. It is worth noting that numerical experiments demonstrate advantages of the second-order NSFD scheme over a second-order standard numerical method, namely, the explicit trapezoidal method.

The rest of the paper is organized as follows:\\
Mathematical analysis of \eqref{eq:NSFDDYN1} is performed in Section \ref{Sec2}. The second-order NSFD scheme for the two-stage structured species model \eqref{eq:1} is constructed in Section \ref{Sec3}. Numerical simulations are conducted and reported in Section \ref{Sec4}. The final section contains some concluding remarks and discussions.
\section{Mathematical analysis of the proposed second-order NSFD method}\label{Sec2}
In this section, we determine conditions guaranteeing that \eqref{eq:NSFDDYN1} preserves the positivity and asymptotic stability of \eqref{eq:DYN} for all finite step sizes as well as is convergent of order $2$. For the sake of convenience, the denominator functions $\phi_i(h, u^k)$ will be written as $\phi_i$ in some places.

We first give a condition for the positivity of the solutions of \eqref{eq:NSFDDYN1}.
\begin{lemma}[The positivity of the NSFD method]\label{Lemma1P}
Let $\tau_i$ ($i = 1, 2, \ldots, n$) be real numbers with the property that
\begin{equation}\label{eq:GNSFD}
\tau_i \geq c_i.
\end{equation}
Then, \eqref{eq:NSFDDYN1} admits the positive orthant $\mathbb{R}_+^n$ as a positively invariant set, that is $u^k \geq 0$ whenever $u^0 \geq 0$.
\end{lemma}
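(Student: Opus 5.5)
The plan is to solve the scheme \eqref{eq:NSFDDYN1} explicitly for $u_i^{k+1}$ and then argue by induction on $k$. Multiplying \eqref{eq:NSFDDYN1} by $\phi_i = \phi_i(h,u^k) > 0$ and collecting the terms in $u_i^{k+1}$ gives
\begin{equation*}
(1 + \tau_i\phi_i)\,u_i^{k+1} = u_i^k + \phi_i f_i(u^k) + \tau_i\phi_i u_i^k,
\end{equation*}
so that
\begin{equation*}
u_i^{k+1} = \frac{u_i^k + \phi_i\big(f_i(u^k) + \tau_i u_i^k\big)}{1 + \tau_i\phi_i}, \qquad i = 1, 2, \ldots, n.
\end{equation*}
Since $\tau_i \geq c_i > 0$ and $\phi_i > 0$, the denominator $1 + \tau_i\phi_i$ is strictly positive. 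Hence the scheme is explicit and well defined for every $h > 0$.

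For the induction, we assume $u^k \geq 0$ and show that the numerator is nonnegative. Write
\begin{equation*}
f_i(u^k) + \tau_i u_i^k = \big(f_i(u^k) + c_i u_i^k\big) + (\tau_i - c_i)\,u_i^k.
\end{equation*}
The first bracket is nonnegative by (\textbf{A1}), which applies because $u^k \geq 0$. The second term is nonnegative by \eqref{eq:GNSFD} together with $u_i^k \geq 0$. Therefore the numerator satisfies $u_i^k + \phi_i(\cdots) \geq 0$, which gives $u_i^{k+1} \geq 0$ for each $i$. Starting from $u^0 \geq 0$, induction then yields $u^k \geq 0$ for all $k$.

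There is no real obstacle here. The only points to state explicitly are:
\begin{itemize}
\item the standing assumption $\phi_i(h,u) > 0$ for $u \geq 0$ and all $h > 0$, which guarantees both that the denominator is positive and that the sign is preserved;
\item that (\textbf{A1}) is invoked only at the nonnegative point $u^k$, which is exactly what the induction hypothesis provides.
\end{itemize}
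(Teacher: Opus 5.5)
Your proof is correct and is the paper's argument: solve the scheme explicitly in the form \eqref{eq:GNSFD1}, use (\textbf{A1}) to get $f_i(u^k)+\tau_i u_i^k \geq (\tau_i - c_i)u_i^k \geq 0$, and conclude by induction. Your remarks that $1+\tau_i\phi_i>0$ (since $\tau_i \geq c_i > 0$ and $\phi_i > 0$) and that (\textbf{A1}) is invoked only at the nonnegative point $u^k$ spell out what the paper leaves implicit.
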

\begin{proof}
This lemma is proved based on mathematical induction. Indeed, assume that $u^k \geq 0$ for $k \geq 0$. We need to show that $u^{k + 1} \geq 0$. It is easily verified that \eqref{eq:NSFDDYN1} can be transformed into the explicit form as
\begin{equation}\label{eq:GNSFD1}
u_i^{k + 1} = \dfrac{u^k_i + \phi_i\big(f_i(u^k) + \tau_iu_i^k\big)}{1 + \tau_i\phi_i}.
\end{equation}
It follows from \eqref{eq:2new} and \eqref{eq:GNSFD1} that
\begin{equation*}
f_i(u^k) + \tau_iu_i^k \geq -cu_i^k + \tau_iu^k = (\tau_i - c_i)u^k \geq 0.
\end{equation*}
Thus, under the condition \eqref{eq:GNSFD1}, $u^k \geq 0$ implies $u^{k + 1} \geq 0$. This is the desired conclusion. The proof is complete.
\end{proof}
Note that \eqref{eq:GNSFD1} can be rewritten in the form
\begin{equation}\label{eq:GNSFD2}
u_i^{k + 1} = u_i^k + \dfrac{\phi_i}{1 + \tau_i\phi_i}f_i(u^k).
\end{equation}
This implies that any equilibrium point $u^*$ of \eqref{eq:GNSFD1} satisfies the system
\begin{equation*}
f_i(u^*) = 0.
\end{equation*}
Hence, we obtain the following result.
\begin{lemma}[The set of equilibrium points]\label{Lemma2}
Let $\mathcal{F}_C$ and $\mathcal{F}_D^h$ be the sets of equilibrium points of \eqref{eq:DYN} and \eqref{eq:NSFDDYN1}, respectively. Then, $\mathcal{F}_C = \mathcal{F}_D^h$ for all $h > 0$.
\end{lemma}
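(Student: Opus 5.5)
The plan is to prove the two inclusions $\mathcal{F}_C \subseteq \mathcal{F}_D^h$ and $\mathcal{F}_D^h \subseteq \mathcal{F}_C$ by working with the equivalent explicit form \eqref{eq:GNSFD2} of the scheme \eqref{eq:NSFDDYN1}. Since $\phi_i(h,u) > 0$ and $\tau_i > 0$, we have $1 + \tau_i\phi_i > 1$. Hence \eqref{eq:NSFDDYN1} can be solved uniquely for $u_i^{k+1}$, which gives \eqref{eq:GNSFD1} and then \eqref{eq:GNSFD2}. I would first spell out this rearrangement: multiply \eqref{eq:NSFDDYN1} by $\phi_i$, collect the $u_i^{k+1}$ terms, and divide by $1+\tau_i\phi_i$. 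This confirms that the scheme is the one-step map
\begin{equation*}
G_i(u) = u_i + \dfrac{\phi_i(h,u)}{1+\tau_i\phi_i(h,u)} f_i(u), \qquad i = 1,2,\ldots,n.
\end{equation*}

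An equilibrium $u^*$ of the discrete scheme is by definition a fixed point of $G$, that is, $G(u^*) = u^*$. Componentwise this reads
\begin{equation*}
\dfrac{\phi_i(h,u^*)}{1+\tau_i\phi_i(h,u^*)} f_i(u^*) = 0.
\end{equation*}
The factor in front of $f_i(u^*)$ is strictly positive for every $h>0$ and every $u^*$ in the domain, because $\phi_i>0$ and $\tau_i \geq 0$. So the condition is equivalent to $f_i(u^*)=0$ for all $i$, which is exactly the condition that $u^*$ is an equilibrium of \eqref{eq:DYN}. Reading this equivalence in both directions gives $\mathcal{F}_D^h \subseteq \mathcal{F}_C$ and $\mathcal{F}_C \subseteq \mathcal{F}_D^h$ simultaneously.

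There is no real obstacle here. The only point needing care is the positivity of the multiplier $\phi_i/(1+\tau_i\phi_i)$: it must hold at the equilibrium point itself for every $h>0$, not just asymptotically as $h \to 0$. This follows from the standing assumption $0<\phi_i(h,u)$ and the nonnegativity of the weights. The same positivity also guarantees that the implicit-looking form \eqref{eq:NSFDDYN1} is well defined, so its fixed points coincide with those of \eqref{eq:GNSFD2}.
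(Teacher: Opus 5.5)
Your proposal is correct and follows the paper's argument: rewrite the scheme in the explicit form \eqref{eq:GNSFD2} and use that the multiplier $\phi_i/(1+\tau_i\phi_i)$ is strictly positive, so the fixed points of the discrete map are exactly the zeros of $f$. You are slightly more careful than the paper, which states only the inclusion $\mathcal{F}_D^h \subseteq \mathcal{F}_C$ and leaves the trivial reverse inclusion and the well-posedness of the explicit form implicit.
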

We now analyze the asymptotic stability for \eqref{eq:NSFDDYN1}. It is easy to see that \eqref{eq:GNSFD2} can be represented in the vector form
\begin{equation}\label{eq:GNSFD3}
u^{k + 1} = u^k + D^k_hf^k,
\end{equation}
where
\begin{equation}\label{eq:GNSFD4}
u^k = 
\begin{bmatrix}
u_1^k\\
u_2^k\\
\vdots\\
u_n^k
\end{bmatrix},\quad
D_h^k = 
\begin{bmatrix}
\Psi_1&0&\ldots&0\\
0&\Psi_2&\ldots&0\\
\vdots&\vdots&&\vdots\\
0&0&\ldots&\Psi_n
\end{bmatrix},\quad
f^k =
\begin{bmatrix}
f_1(u^k)\\
f_2(u^k)\\
\vdots\\
f_n(u^k)
\end{bmatrix},
\end{equation}
where
\begin{equation}\label{eq:GNSFD5}
\Psi_i := \dfrac{\phi_i}{1 + \tau_i\phi_i}.
\end{equation}
It is easy to verify that $0 < \Psi_i = h + \mathcal{O}(h^2)$ as $h \to 0$. In other words, $\Psi_i$ defined in \ref{eq:GNSFD5} has the same property as $\phi_i$ but it contains $\tau_i$ as a control parameter.

With the support of Jury conditions or Schur-Cohn criteria \cite{Allen, Gantmacher}, we will determine appropriate conditions ensuring that \eqref{eq:NSFDDYN1} preserves the asymptotic stability of \eqref{eq:DYN}. Assume that $u^*$ is any equilibrium point of \eqref{eq:DYN} and its asymptotic stability has been determined via the linearized method with the help of Routh-Hurwitz criteria \cite{Allen}. We need to analyze its asymptotic stability with respect to \eqref{eq:NSFDDYN1}. The construction of the stability-preserving NSFD method now reduces to determining conditions such that
\begin{enumerate}
\item[(i)] if $u^*$ is asymptotically stable with respect to \eqref{eq:DYN}, then it is also asymptotically stable with respect to \eqref{eq:NSFDDYN1};
\item[(ii)] if $u^*$ is unstable with respect to \eqref{eq:DYN}, then it is also unstable with respect to \eqref{eq:NSFDDYN1}.
\end{enumerate}
By Jury conditions or Schur-Cohn criteria, the asymptotic stability of $u^*$ with respect to \eqref{eq:NSFDDYN1} is determined as:
\begin{itemize}
\item $u^*$ is asymptotically stable if $|\widehat{\lambda}| < 1$ for all $\widehat{\lambda} \in \sigma(J_D(u^*))$, where $J_D(u^*)$ is the Jacobian matrix of \eqref{eq:NSFDDYN1} evaluated at $u^*$;
\item $u^*$ is unstable if $|\widehat{\lambda}| > 1$ for some $\widehat{\lambda} \in \sigma(J_D(u^*))$.
\end{itemize}
Let us denote by $J_C(u^*)$ the Jacobian matrix of \eqref{eq:DYN} evaluated at $u^*$. Note that $u^*$ is also an equilibrium point of \eqref{eq:NSFDDYN1}. Then, it follows from \eqref{eq:GNSFD3} and \eqref{eq:GNSFD4} that the Jacobian of \eqref{eq:NSFDDYN1} evaluated at $u^*$ is given by
\begin{equation}\label{eq:GNSFD6}
J_D(u^*) = \mathbb{I} + D^k_hJ_C(u^*),
\end{equation}
where $\mathbb{I}$ is the identity matrix.

By using the methodology in \cite{Alalhareth1, Alalhareth2, Dimitrov5, Dimitrov6, Wood1}, we can determine stability thresholds $\Psi_i^S$ with the property that \eqref{eq:NSFDDYN1} preserves the asymptotic stability of the equilibrium points of \eqref{eq:DYN} whenever
\begin{equation*}
\Psi_i(h) < \Psi_i^S, \quad \forall 1 \leq i \leq n,
\end{equation*}
which is equivalent to
\begin{equation*}
\phi_i\big(1 - \tau_i\Psi_i^S\big) < \Psi_i^S.
\end{equation*}
This condition is satisfied if
\begin{equation}\label{eq:GNSFD7}
\tau_i \geq {\big(\Psi_i^S\big)}^{-1}.
\end{equation}
Thus, we determine a threshold of dynamic consistency $\tau_{DC}$ for \eqref{eq:NSFDDYN1}. More precisely, \eqref{eq:NSFDDYN1} preserves the posivitity of the solutions and the asymptotic stability of the equilibrium points of \eqref{eq:1} under the condition
\begin{equation}\label{eq:GNSFD8}
\tau \geq \tau_{DC}.
\end{equation}
We now construct a condition imposed on the denominator functions $\phi_i$ such that \eqref{eq:NSFDDYN1} is convergent of order $2$.
\begin{theorem}\label{Theorem2ndNSFD}
Let $\phi_i(h, u)$ ($i = 1, 2, \ldots, n$) be positive denominator functions that satisfy
\begin{equation}\label{eq:2ndGNSFD}
\dfrac{\partial^2 \phi_i(h, u)}{\partial h^2}\bigg|_{h = 0} = D_i(u) := 2\tau_i + \dfrac{1}{f_i(u)}\mathlarger{\mathlarger{\sum}}_{j = 1}^n\dfrac{\partial f_j}{\partial u_j}f_j(u),
\end{equation}
for all $u \geq \mathbb{R}_+^n$ such that $f_i(u) \ne 0$. Then, the truncation error of the NSFD scheme \eqref{eq:NSFDDYN1} is $\mathcal{O}(h^3)$, i.e., it is consistent of order $2$.
\end{theorem}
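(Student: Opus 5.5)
The plan is to work with the explicit one-step form \eqref{eq:GNSFD2}, i.e.\ $u_i^{k+1}=u_i^k+\Psi_i(h,u^k)f_i(u^k)$ with $\Psi_i=\phi_i/(1+\tau_i\phi_i)$ as in \eqref{eq:GNSFD5}. I take the local truncation error to be
\begin{equation*}
T_i(h,t_k):=u_i(t_k+h)-u_i(t_k)-\Psi_i\big(h,u(t_k)\big)f_i\big(u(t_k)\big),
\end{equation*}
where $u(t)$ is the exact solution of \eqref{eq:DYN}. The argument compares the Taylor expansions in $h$ of the two pieces up to order $h^2$. Throughout I assume that $f$ is $C^2$ and that $\phi_i$ is $C^3$ in $h$, so that every $\mathcal{O}(h^3)$ remainder is uniform for $u$ in the compact set swept by the exact solution on $[0,T]$.

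\textbf{Step 1 (exact solution).} By Taylor's theorem and the chain rule $\ddot u=J_C(u)\dot u=J_C(u)f(u)$, we have
\begin{equation*}
u_i(t+h)=u_i(t)+hf_i(u)+\frac{h^2}{2}\sum_{j=1}^n\frac{\partial f_i}{\partial u_j}(u)f_j(u)+\mathcal{O}(h^3),\qquad u=u(t).
\end{equation*}
I read the sum in \eqref{eq:2ndGNSFD} as this $i$-th component $(J_Cf)_i$, with $\partial f_i/\partial u_j$ in the summand. The index as printed looks like a typo, and the computation below only works with this reading.

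\textbf{Step 2 (numerical increment).} The property $0<\phi_i=h+\mathcal{O}(h^2)$ gives $\phi_i(0,u)=0$ and $\partial_h\phi_i(0,u)=1$. Hence $\phi_i=h+\tfrac12D_i(u)h^2+\mathcal{O}(h^3)$. Expanding $1/(1+\tau_i\phi_i)=1-\tau_ih+\mathcal{O}(h^2)$ yields
\begin{equation*}
\Psi_i=h+\Big(\tfrac12D_i(u)-\tau_i\Big)h^2+\mathcal{O}(h^3).
\end{equation*}

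\textbf{Step 3 (matching).} Subtracting the two expansions, the $h$-terms cancel, and the coefficient of $h^2$ in $T_i$ is
\begin{equation*}
\tfrac12(J_Cf)_i-\Big(\tfrac12D_i-\tau_i\Big)f_i.
\end{equation*}
This vanishes exactly when $D_i=2\tau_i+(J_Cf)_i/f_i$, which is condition \eqref{eq:2ndGNSFD}. Therefore $T_i=\mathcal{O}(h^3)$ at every point where $f_i(u(t_k))\neq0$.

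\textbf{Main obstacle: points where $f_i(u)=0$.} At such points $\Psi_if_i=0$ identically, so $T_i=\tfrac{h^2}{2}(J_Cf)_i+\mathcal{O}(h^3)$. This is $\mathcal{O}(h^3)$ only if $(J_Cf)_i$ also vanishes there. At equilibria it does, since $f\equiv0$, so the claim holds at all equilibria. At non-equilibrium points with $f_i=0$ the statement's hypothesis gives no control, and the claim can genuinely fail there.

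I would first try to show that along a non-equilibrium trajectory the zeros of $f_i(u(t))$ are isolated instants, and then restrict the statement to points with $f_i\neq0$, as the theorem's hypothesis does. If that is not enough, I would strengthen the hypothesis so that $\partial_h^2\phi_i(0,u)$ extends continuously and $\Psi_i f_i$ is $C^2$ across $\{f_i=0\}$. In that case the $h^2$-coefficient $\big(\tfrac12D_i-\tau_i\big)f_i=\tfrac12(J_Cf)_i$ holds on a dense set and extends by continuity, which removes the exceptional points.
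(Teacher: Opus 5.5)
Your argument is correct and is essentially the paper's proof. The paper Taylor-expands the explicit update $R_i(h,u)=u_i+\Psi_i f_i(u)$ using $\partial_h^2R_i|_{h=0}=f_i(u)\big(\partial_h^2\phi_i|_{h=0}-2\tau_i\big)$, which is your expansion $\Psi_i=h+(\tfrac12D_i-\tau_i)h^2+\mathcal{O}(h^3)$, and matches it with the exact solution's Taylor series; your reading of the sum as $\sum_j(\partial f_i/\partial u_j)f_j$ is also the right one, since the printed $\partial f_j/\partial u_j$ is a typo, as the paper's own $g_1$ in Section 3 confirms.

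Your caveat about non-equilibrium points with $f_i=0$ is a real limitation that the paper passes over silently. However, your continuity-based strengthening can hold only if $(J_Cf)_i=\nabla f_i\cdot f$ vanishes on $\{f_i=0\}$, i.e.\ $f$ is tangent to that nullcline. So it excludes the generic case rather than repairing it, and in that case $D_i$, and with it your assumed uniform $\mathcal{O}(h^3)$ remainder, blows up near the crossing. The defensible conclusion is the pointwise bound off $\{f_i=0\}$, which is also all the paper's argument delivers.
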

\begin{proof}
First, let us denote $R(h, u^k)$ the right-hand side function of \eqref{eq:GNSFD3}, that is $R(h, u^k) = u^k + D^k_hf^k$, where $D^k_h$ and $f^k$ are given in \eqref{eq:GNSFD4}. It is easy to verify that
\begin{equation}\label{eq:GNSFD9}
\begin{split}
R_i(0, u) &= u,\\
\dfrac{\partial R_i(h, u)}{\partial h}\bigg|_{h = 0} &= f_i(u),\\
\dfrac{\partial^2 R_i(h, u)}{\partial h^2}\bigg|_{h = 0} &= f_i(u)\bigg(\dfrac{\partial^2 \phi_i(h, u)}{\partial h^2}\bigg|_{h = 0} - 2\tau_i\bigg).
\end{split}
\end{equation}
Using Taylor's expansion in combination with \eqref{eq:GNSFD3} and \eqref{eq:GNSFD9} yields:
\begin{equation}\label{eq:GNSFD10}
\begin{split}
u_i^{k + 1} = R_i(h, u^k) &= R_i(0, u^k) + \dfrac{\partial R_i(h, u)}{\partial h}\bigg|_{h = 0}h + \dfrac{\partial^2 R_i(h, x, y)}{\partial h^2}\bigg|_{h = 0} \dfrac{h^2}{2} + \mathcal{O}(h^3),\\
&= u_i^k + f_i(u^k)h + f_i(u^k)\bigg(\dfrac{\partial^2 \phi_i(h, u)}{\partial h^2}\bigg|_{h = 0} - 2\tau_i\bigg)\dfrac{h^2}{2} + \mathcal{O}(h^3).
\end{split}
\end{equation}
On the other hand, using Taylor's expansion for the exact solution at $t_k$ gives
\begin{equation}\label{eq:GNSFD10a}
\begin{split}
u_i(t_{k + 1}) = u_i(t_k + h) &= u_i(t_k) + u_i'(t_k)h + u_i''(t_k)\dfrac{h^2}{2} + \mathcal{O}(h^3),\\
&= u_i(t_k) + f_i\big(u(t_k)\big)h + \mathlarger{\mathlarger{\sum}}_{j = 1}^n\dfrac{\partial f_j(u(t_k))}{\partial u_j}f_j(u(t_k))\dfrac{h^2}{2} + \mathcal{O}(h^3).
\end{split}
\end{equation}
Thus, it follows from \eqref{eq:GNSFD10} and \eqref{eq:GNSFD10a} that
\begin{equation*}
u_i(t_{k + 1}) - u_i^{k+1} = \mathcal{O}(h^3)
\end{equation*}
if \eqref{eq:2ndGNSFD} holds. This completes this proof.
\end{proof}
Following arguments analogous to those in the proof of \cite[Theorem 5.2]{Cresson} (see \cite[Appendix B]{Cresson}), it can be shown that the NSFD method \eqref{eq:NSFDDYN1} is second-order convergent under the assumptions of Theorem \ref{Theorem2ndNSFD}.
\begin{remark}
In a simple case, we can choose the denominator functions $\phi_i$ in the form
\begin{equation}\label{eq:18}
\phi_i(h, u)=\left\{\begin{array}{l}
\dfrac{e^{D_i(u)h} - 1}{D_i(u)} \quad \text { if } \quad D_i(u) \neq 0,\\
h \quad \text { if } \quad D_i(u)=0,
\end{array}\right.
\end{equation}
where $D_i(u)$ are defined in \eqref{eq:2ndGNSFD}. These denominator functions satisfy not only \eqref{eq:2ndGNSFD} but also $\phi_i(h, u) = h^2+ \mathcal{O}\left(h^2\right)$ as $h \rightarrow 0$ and $\phi_i(h, u) > 0$ for all $h >0, u \geq 0$.
\end{remark}
Before ending this section, we consider the combination of the constructed second-order NSFD method \eqref{eq:NSFDDYN1} with the Richardson extrapolation technique \cite{Burden, Richardson, Roy} to improve its accuracy. Let us denote by $\{u_{h}^k\}$ and $\{u_{h/2}^k\}$ the approximate solutions generated by the second-order NSFD method \eqref{eq:NSFDDYN1} by employing the step sizes $h$ and $h/2$, respectively. Then,
\begin{equation}\label{eq:3rdENSFD}
v_{h}^k := \dfrac{4u_{h/2}^k  - u_{h}^k}{3}
\end{equation}
generates an $\mathcal{O}(h^3)$ approximation formula for the solution of \eqref{eq:DYN} \cite{Burden}. Similarly, the fourth-order formula can be defined as  \cite{Burden}
\begin{equation}\label{eq:4thENSFD}
w^k_{h}:=\dfrac{8 v^k_{h/2}- v^k_{h}}{7}.
\end{equation}
Generally, higher-accuracy approximations can be generated from lower-accuracy ones. More clearly , the $\mathcal{O}(h^{p+1})$ approximate formula is given by  \cite{Burden}
\begin{equation*}
Y_{h}^k := \dfrac{2^p X^p_{h/2} - X^p_{h}}{2^p-1},
\end{equation*}
where $X^p_{h}$ is an $\mathcal{O}(h^p)$ approximate formula.
\begin{remark}\label{Remark1new}
In \cite{Maamar}, Maamar et al. constructed an NSFD scheme for a mathematical model of Zika virus transmission, which is represented by a relatively high-dimensional system of differential equations. However, it is easy to verify that this NSFD scheme is only convergent of order $1$. Similarly, Wacker in \cite{Wacker} proposed a first-order NSFD scheme for  an extended nonlinear three-compartmental model of ethanol metabolism in the human body. The NSFD method \eqref{eq:NSFDDYN1} can be applied to derive simple second-order and dynamically consistent NSFD schemes for these two ODE models.
\end{remark}
\section{Second-order NSFD scheme for the two-stage structured species model}\label{Sec3}
In this work, we apply the approach proposed in Section \ref{Sec2} to construct a second-order NSFD scheme for the two-stage (migratory) fish population model with recruitment \eqref{eq:1}.

In \cite{HoangValverde}, Hoang and Valverde extended Mickens' methodology \cite{Mickens1, Mickens2, Mickens3, Mickens4, Mickens5} to construct an NSFD model for approximating the solutions of \eqref{eq:1} on the time interval $[0, T]$. This NSFD scheme has the following form
\begin{equation}\label{eq:2}
\begin{split}
\dfrac{x_{k+1}- x_k}{\phi(h)} &= \delta y_k - \dfrac{\alpha x_{k+1}}{\beta + x_k}- \mu x_{k + 1}, \\
\dfrac{y_{k+1} - y_k}{\phi(h)} &= \dfrac{\alpha x_{k + 1}}{\beta + x_k} - (\mu+F) y_{k+1},
\end{split}
\end{equation}
where
\begin{itemize}
\item $(x_k,\,y_k)^T$ is the intended approximation for $(x(t_k),\,y(t_k))^T$ with $t_k = k\Delta t$ ($k = 1, 2, \ldots, N$) and $h= \dfrac{T}{N}$ being the step size;
\item $\phi(h)$ is a denominator function with the property that $0 < \phi(h) = h + \mathcal{O}(h^2)$.
\end{itemize}
Through rigorous mathematical analysis, simple conditions imposed on the denominator function were determined such that \eqref{eq:2} is dynamically consistent with \eqref{eq:1}. In particular, \eqref{eq:2} preserves the following properties of \eqref{eq:1} for any step sizes:
\begin{itemize}
\item[(\textbf{$P_1$})] \textbf{The positivity of the solutions:} The model \eqref{eq:1} admits the set
\begin{equation}\label{eq:3a}
\Omega = \{(x, y) \in \mathbb{R}^2|x, y \geq 0\}
\end{equation}
as a positively invariant set.
\item[(\textbf{$P_2$})]  \textbf{The set of equilibrium points:} A trivial equilibrium point $E_T^* = (0,\, 0)$ exists for all the values of the parameters, whereas  a non-trivial (positive) equilibrium point $E_P^* = (x^*,\,y^*)$ exists if and only if
\begin{equation*}
\mathcal{R}_0=\frac{\delta}{\mu+F}-\frac{\mu \beta}{\alpha} > 1.
\end{equation*}
Furthermore, when this is the case, $x^*$ and $y^*$ are computed as \cite{Ladino}
\begin{equation}\label{eq:3}
x^*=\dfrac{\alpha}{\mu}\left(\frac{\delta}{\mu+F}-\dfrac{\mu \beta}{\alpha}-1\right), \quad y^*=\dfrac{\alpha}{\delta-(\mu+F)}\left(\dfrac{\delta}{\mu+F}-\dfrac{\mu \beta}{\alpha}-1\right).
\end{equation} 
\item[(\textbf{$P_3$})]  \textbf{The asymptotic stability:} The trivial equilibrium point $E_T^*$ of \eqref{eq:1} is asymptotically stable of $\mathcal{R}_0 < 1$ and is unstable if $\mathcal{R}_0 > 1$. The positive equilibrium point $E_P^*$ is asymptotically stable if and only if it exists.
\end{itemize}
Although \eqref{eq:2} has the advantage of simulating the dynamics of \eqref{eq:1} over long time intervals due to its dynamic consistency and simplicity, it is only convergent of order $1$ (see \cite{HoangValverde}). Motivated by this, our main objective is to construct a second-order NSFD scheme that preserves the properties $(P_1)-(P_3)$ of \eqref{eq:1} for all values of the step size.

Based on the approach proposed in \cite{HoangMatthias2}, we can construct a second-order NSFD scheme for \eqref{eq:1} in the form
\begin{equation}\label{eq:4}
\begin{split}
\dfrac{x_{k+1}- x_k}{\phi_1(h, x_k, y_k)} &= \delta y_k - \dfrac{\alpha x_{k+1}}{\beta + x_k}- \mu x_{k + 1} + \tau_1x_k - \tau_1x_{k+1}, \\
\dfrac{y_{k+1} - y_k}{\phi_2(h, x_k, y_k)} &= \dfrac{\alpha x_{k + 1}}{\beta + x_k} - (\mu+F) y_{k+1} + \tau_2y_k - \tau_2y_{k+1},
\end{split}
\end{equation}
where $\tau_1$ and $\tau_2$ are positive real numbers, which play a role as weights and ensure the dynamic consistency of \eqref{eq:4}, whereas the denominator functions $\phi_i(h, x, y)$ $(i = 1, 2)$ are chosen so that \eqref{eq:4} is convergent of order $2$. However, the stability analysis of \eqref{eq:4} becomes challenging because it contains many parameters and the expression for the positive equilibrium is complicated. Therefore, we aim to construct a scheme with a simpler structure.

It is easy to verify that \eqref{eq:1} satisfies \eqref{eq:2new} with
\begin{equation*}
c_1 = \dfrac{\alpha}{\beta} + \mu, \quad c_2 = \mu + F.
\end{equation*}
Hence, it is reasonable to adopt the approach in Section \ref{Sec2} to derive the following scheme:
\begin{equation}\label{eq:NSFD}
\begin{split}
\dfrac{x_{k+1}- x_k}{\phi_1(h, x_k, y_k)} &= \delta y_k - \dfrac{\alpha x_{k}}{\beta + x_k}- \mu x_{k} + \tau_1x_k - \tau_1x_{k+1}, \\
\dfrac{y_{k+1} - y_k}{\phi_2(h, x_k, y_k)} &= \dfrac{\alpha x_{k}}{\beta + x_k} - (\mu+F) y_{k} + \tau_2y_k - \tau_2y_{k+1},
\end{split}
\end{equation}
$\tau_1$ and $\tau_2$ are positive real numbers.

%
%
%
In the next subsections, we will investigate dynamical properties and convergence analysis of the NSFD model \eqref{eq:NSFD}.  For this purpose, from now on we always assume that $\tau_1$ and $\tau_2$ satisfy:
\begin{equation}\label{eq:6}
\tau_1 \geq \dfrac{\alpha}{\beta} + \mu, \quad \tau_2 \geq \mu + F.
\end{equation}
\subsection{Basic properties}
We first examine the positively invariant set of \eqref{eq:NSFD}.
\begin{theorem}[Positively invariant set]\label{Theorem1}
Under the condition \eqref{eq:6}, the discrete-time model \eqref{eq:NSFD} admits the set $\Omega$ defined in \eqref{eq:3a} as a positively invariant set.
\end{theorem}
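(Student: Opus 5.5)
The plan is to argue by induction on $k$, exactly as in Lemma \ref{Lemma1P}. One could simply observe that \eqref{eq:NSFD} is the instance of \eqref{eq:NSFDDYN1} with $n=2$, $u=(x,y)^T$, $f_1,f_2$ as in \eqref{eq:1}, $c_1 = \alpha/\beta+\mu$ and $c_2=\mu+F$. Then condition \eqref{eq:6} is precisely \eqref{eq:GNSFD}, and Lemma \ref{Lemma1P} applies. However, I will also write out the direct verification, so that the argument is self-contained and we check that (\textbf{A1}) really holds with these constants on $\Omega$.

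\textbf{Step 1 (explicit form).} Solve each equation of \eqref{eq:NSFD} for the new iterate. This gives
\begin{equation*}
x_{k+1} = \dfrac{x_k + \phi_1\Big(\delta y_k - \dfrac{\alpha x_k}{\beta+x_k} - \mu x_k + \tau_1 x_k\Big)}{1+\tau_1\phi_1},\qquad
y_{k+1} = \dfrac{y_k + \phi_2\Big(\dfrac{\alpha x_k}{\beta+x_k} - (\mu+F) y_k + \tau_2 y_k\Big)}{1+\tau_2\phi_2}.
\end{equation*}
Here $\phi_i=\phi_i(h,x_k,y_k)>0$ and $\tau_i>0$, so both denominators are positive and the scheme is well defined and explicit.

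\textbf{Step 2 (sign of the numerators).} Assume $(x_k,y_k)\in\Omega$.

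For the first component, since $\beta+x_k\ge\beta$ we have $\dfrac{\alpha x_k}{\beta+x_k}\le \dfrac{\alpha}{\beta}x_k$. Hence
\begin{equation*}
\delta y_k - \dfrac{\alpha x_k}{\beta+x_k} - \mu x_k + \tau_1 x_k \ \ge\ \delta y_k + \Big(\tau_1 - \dfrac{\alpha}{\beta}-\mu\Big)x_k \ \ge\ 0,
\end{equation*}
where the last inequality uses \eqref{eq:6}.

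For the second component, $\dfrac{\alpha x_k}{\beta+x_k}\ge 0$. Therefore
\begin{equation*}
\dfrac{\alpha x_k}{\beta+x_k} - (\mu+F)y_k + \tau_2 y_k \ \ge\ (\tau_2-\mu-F)y_k \ \ge\ 0,
\end{equation*}
again by \eqref{eq:6}.

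Adding the nonnegative terms $x_k$ and $y_k$ to these quantities keeps the numerators nonnegative. Dividing by the positive denominators then gives $x_{k+1},y_{k+1}\ge 0$. By induction, $(x_0,y_0)\in\Omega$ implies $(x_k,y_k)\in\Omega$ for all $k$.

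There is no real obstacle here. The only points needing care are the bound $\alpha x/(\beta+x)\le (\alpha/\beta)x$ for $x\ge0$, which justifies the choice of $c_1$, and the positivity of the denominator functions $\phi_i$, which is part of the standing assumptions.
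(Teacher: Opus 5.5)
Your proof is correct and follows the paper's own argument: rewrite \eqref{eq:NSFD} in the explicit form \eqref{eq:7}, use \eqref{eq:6} with $\alpha/(\beta+x_k)\le\alpha/\beta$ to show both numerators are nonnegative, and conclude by induction. Your explicit formula for $x_{k+1}$ also correctly keeps the factor $x_k$ on the term $\phi_1\bigl(\alpha/(\beta+x_k)+\mu-\tau_1\bigr)$, which the paper's display \eqref{eq:7} drops.
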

\begin{proof}
Assume that $x_0, y_0 \geq$. We need to show that $x_k, y_k \geq$ for all $k > 0$. Indeed, \eqref{eq:NSFD} can be transformed into the form
\begin{equation}\label{eq:7}
\begin{split}
x_{k + 1} &= \dfrac{x_k + \phi_1\delta y_k - \phi_1\dfrac{\alpha x_{k}}{\beta + x_k}- \phi_1\mu x_{k} + \phi_1\tau_1x_k}{1 + \tau_1\phi_1} = \dfrac{x_k + \phi_1\delta y_k - \phi_1\bigg(\dfrac{\alpha}{\beta + x_k} +  \mu - \tau_1\bigg)}{1 + \tau_1\phi_1}\\
y_{k + 1} &= \dfrac{y_k + \phi_2\dfrac{\alpha x_{k}}{\beta + x_k} - \phi_2(\mu+F) y_{k} + \phi_2\tau_2y_k}{1 + \tau_2\phi_2} = \dfrac{y_k + \phi_2\dfrac{\alpha x_{k}}{\beta + x_k} - \phi_2(\mu+F - \tau_2)y_k}{1 + \tau_2\phi_2}.
\end{split}
\end{equation}
It follows from \eqref{eq:6} that
\begin{equation*}
\begin{split}
&\dfrac{\alpha}{\beta + x_k} +  \mu - \tau_1 \leq \dfrac{\alpha}{\beta} + \mu - \tau_1 \leq 0,\\
&\mu+F - \tau_2 \leq 0.
\end{split}
\end{equation*}
Thus, we deduce from \eqref{eq:7} that $x_{k + 1}, y_{k + 1} \geq 0$ whenever $x_k, y_k \geq 0$. Hence, by mathematical induction, we conclude that $x_k, y_k \geq 0$ for $k > 0$ whenever $x_0, y_0 \geq 0$. This is desired conclusion. The proof is complete.
\end{proof}
It is easy to verify that \eqref{eq:7} can be rewritten in the form:
\begin{equation}\label{eq:NSFD1}
\begin{split}
x_{k+1} &= x_k + \dfrac{\phi_1}{1 + \tau_1\phi_1}f_1(x_k, y_k),\\
y_{k+1} &= y_k + \dfrac{\phi_1}{1 + \tau_2\phi_2}f_2(x_k, y_k),
\end{split}
\end{equation}

As a consequence of \eqref{eq:7}, we obtain that any equilibrium point of \eqref{eq:NSFD} is a solution of the system
\begin{equation*}
f_1(x, y) = f_2(x, y) = 0,
\end{equation*}
where $f_1$ and $f_2$ are defined in \eqref{eq:1}. This implies that the sets of the equilibrium points of \eqref{eq:NSFD} and \eqref{eq:1} are identical for any step sizes.

\subsection{Stability analysis of the trivial equilibrium point}\label{Subsec2.2}
This subsection investigates the asymptotic stability of the trivial equilibrium points with respect to the NSFD model \eqref{eq:NSFD}.

Let us denote by $J_C(E)$ the Jacobian matrix of \eqref{eq:1} evaluated at any equilibrium point $E$. It is easy to verify that
\begin{equation}\label{eq:9a}
J_C(E_T^*) =
\begin{pmatrix}
-\bigg(\dfrac{\alpha}{\beta} + \mu\bigg)&\delta\\
&\\
\dfrac{\alpha}{\beta}&-(\mu + F)
\end{pmatrix}
\end{equation}
and
\begin{equation}\label{eq:10a}
J_C(E_P^*) =
\begin{pmatrix}
-\dfrac{\alpha\beta}{(\beta + x_*)^2} - \mu&\delta\\
&\\
\dfrac{\alpha\beta}{(\beta + x_*)^2}& -(\mu + F)
\end{pmatrix}.
\end{equation}
From the mathematical analysis in \cite{Ladino}, we obtain
\begin{itemize}
\item If $\mathcal{R}_0 < 1$, then
\begin{equation}\label{eq:9}
\det(J^C(E_T^*)) > 0, \quad J_{11}^C(E_T^*) < 0, \quad J_{22}^C(E_T^*) < 0.
\end{equation}
and $\det(J^C(E_T^*)) < 0$ if $\mathcal{R}_0 > 1$.
\item If $\mathcal{R}_0 > 1$, then
\begin{equation}\label{eq:10}
\det(J^C(E_P^*)) > 0, \quad J_{11}^C(E_P^*) < 0, \quad J_{22}^C(E_P^*) < 0
\end{equation}
\end{itemize}
Note that \eqref{eq:9} and \eqref{eq:10} imply the asymptotic stability of $E_T^*$ and $E_P^*$ of the continuous-time model \eqref{eq:1}, respectively.
\begin{theorem}[Stability analysis of the trivial equilibrium point]\label{Theorem2}
(i) Assume that $\mathcal{R}_0 < 1$ and $\tau_1$ and $\tau_2$ are real numbers with the property that
\begin{equation}\label{eq:DC1}
\begin{split}
&\tau_2\big(-J_{11}^C(E_T^*)\big) + \tau_1\big(-J_{22}^C(E_T^*)\big) \geq \det(J^C(E_T^*)),\\
&\tau_1 \geq -\dfrac{J_{11}^C(E_T^*)}{2},\\
&\tau_2 \geq -\dfrac{J_{22}^C(E_T^*)}{2},\\
&C_0:= 4\tau_1\tau_2 + 2\tau_2J_{11}^C(E_T^*) + 2\tau_1J_{22}^C(E_T^*) \geq 0.
\end{split}
\end{equation}
Then, the trivial equilibrium point $E_T^*$ of \eqref{eq:NSFD} is asymptotically stable.\\
(ii) If $\mathcal{R}_0 > 1$, then the trivial equilibrium point $E_T^*$ of \eqref{eq:NSFD} is unstable.

\end{theorem}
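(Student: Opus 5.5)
The natural route is to linearize \eqref{eq:NSFD} at $E_T^*$ using the explicit form \eqref{eq:NSFD1}, i.e. \eqref{eq:GNSFD6}, and then apply the Jury conditions to the resulting $2\times 2$ Jacobian. Write $a := -J_{11}^C(E_T^*) = \alpha/\beta+\mu > 0$, $d := -J_{22}^C(E_T^*) = \mu+F > 0$, $b := \delta$, $c := \alpha/\beta$, and $\Delta := \det(J^C(E_T^*)) = ad - bc$. With $\Psi_i = \phi_i/(1+\tau_i\phi_i)$ as in \eqref{eq:GNSFD5}, we get
\begin{equation*}
J_D(E_T^*) = \begin{pmatrix} 1-\Psi_1 a & \Psi_1 b\\ \Psi_2 c & 1-\Psi_2 d\end{pmatrix}.
\end{equation*}
Hence
\begin{equation*}
\mathrm{tr}\,J_D = 2-\Psi_1 a-\Psi_2 d, \qquad \det J_D = 1-\Psi_1 a-\Psi_2 d+\Psi_1\Psi_2\Delta.
\end{equation*}
The key structural fact is $1/\Psi_i = 1/\phi_i + \tau_i > \tau_i$ strictly, since $\phi_i>0$. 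This is exactly where the weights enter.

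\textbf{Part (i).} For the characteristic polynomial $p(\lambda)=\lambda^2-\mathrm{tr}\,J_D\,\lambda+\det J_D$, I will verify the three Jury conditions $p(1)>0$, $\det J_D<1$ and $p(-1)>0$.

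First, $p(1) = \Psi_1\Psi_2\Delta > 0$ holds because $\mathcal{R}_0<1$ gives $\Delta>0$ by \eqref{eq:9}.

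Second, $\det J_D<1$ is equivalent to $\Psi_1\Psi_2\Delta < \Psi_1 a + \Psi_2 d$. Dividing by $\Psi_1\Psi_2$, this becomes $\Delta < a/\Psi_2 + d/\Psi_1$. The right-hand side is strictly larger than $a\tau_2 + d\tau_1$, which is $\ge \Delta$ by the first inequality in \eqref{eq:DC1}.

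Third, $p(-1)>0$ reads $4 - 2\Psi_1 a - 2\Psi_2 d + \Psi_1\Psi_2\Delta > 0$. Dividing by $\Psi_1\Psi_2$ and setting $X=1/\Psi_1$, $Y=1/\Psi_2$, it suffices to show
\begin{equation*}
g(X,Y) := 4XY - 2aY - 2dX + \Delta > 0 \quad \text{for } X>\tau_1,\ Y>\tau_2.
\end{equation*}
Since $\partial_X g = 4Y - 2d > 0$ when $Y>\tau_2\ge d/2$, and $\partial_Y g = 4X-2a \ge 0$ when $X \ge \tau_1 \ge a/2$, we can move first in $X$ and then in $Y$. This gives
\begin{equation*}
g(X,Y) > g(\tau_1,Y) \ge g(\tau_1,\tau_2) = C_0 + \Delta > 0,
\end{equation*}
using $C_0\ge 0$ and $\Delta>0$.

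All three Jury conditions hold, so both eigenvalues of $J_D(E_T^*)$ lie in the open unit disk and $E_T^*$ is asymptotically stable for every $h>0$.

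\textbf{Part (ii).} When $\mathcal{R}_0>1$ we have $\Delta<0$ (from the analysis after \eqref{eq:9}), so $p(1)=\Psi_1\Psi_2\Delta<0$. Because $p(\lambda)\to+\infty$ as $\lambda\to+\infty$, the intermediate value theorem gives a real eigenvalue $\widehat\lambda>1$, and $E_T^*$ is unstable.

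The only delicate step is the third Jury condition. The sign and monotonicity bookkeeping there, namely the strictness of $1/\Psi_i>\tau_i$ and the order in which the two variables are decreased, is exactly what the conditions $\tau_1\ge a/2$, $\tau_2\ge d/2$ and $C_0\ge 0$ in \eqref{eq:DC1} are designed to handle.
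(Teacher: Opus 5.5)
Your proof is correct and follows essentially the paper's route. You linearize the scheme at $E_T^*$ to get the Jacobian with entries $1+\Psi_1J_{11}^C$, $\Psi_1J_{12}^C$, $\Psi_2J_{21}^C$, $1+\Psi_2J_{22}^C$, verify the three Jury conditions $p(1)>0$, $\det(J^D)<1$, $p(-1)>0$, and use $p(1)=\Psi_1\Psi_2\det(J^C(E_T^*))<0$ for part (ii). Your substitution $X=1/\Psi_1=1/\phi_1+\tau_1$, $Y=1/\Psi_2=1/\phi_2+\tau_2$ together with the monotonicity argument is equivalent to the paper's step of clearing the denominator $(1+\tau_1\phi_1)(1+\tau_2\phi_2)$. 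That step checks that the coefficients $4$, $2(2\tau_i+J_{ii}^C(E_T^*))$ and $C_0+\det(J^C(E_T^*))$ of the resulting polynomial in $\phi_1,\phi_2$ are nonnegative.
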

\begin{proof}
\textbf{Proof of Part (i).} From \eqref{eq:7}, the Jacobian matrix of \eqref{eq:NSFD} evaluated at $E_T^0$ is given by
\begin{equation*}\label{eq:11}
J^D(E_T^*) = 
\begin{pmatrix}
1 + \dfrac{\phi_1}{1 + \tau_1\phi_1}J_{11}^C(E_T^*)&\dfrac{\phi_1}{1 + \tau_1\phi_1}J_{12}^C(E_T^*)\\
&\\
\dfrac{\phi_2}{1 + \tau_2\phi_2}J_{21}^C(E_T^*)&1 + \dfrac{\phi_2}{1 + \tau_2\phi_2}J_{22}^C(E_T^*)
\end{pmatrix}.
\end{equation*}
Consequently, the characteristic polynomial of $J^D(E_T^*)$ is
\begin{equation*}
P_{J^D(E_T^*)}(\lambda) = \lambda^2 - Tr(J^D(E_T^*))\lambda + \det(J^D(E_T^*)),
\end{equation*}
where
\begin{equation*}
\begin{split}
Tr(J^D(E_T^*)) &= 2  + \dfrac{\phi_1}{1 + \tau_1\phi_1}J_{11}^C(E_T^*) + \dfrac{\phi_2}{1 + \tau_2\phi_2}J_{22}^C(E_T^*),\\
\det(J^D(E_T^*)) &= \bigg(1 + \dfrac{\phi_1}{1 + \tau_1\phi_1}J_{11}^C(E_T^*)\bigg)\bigg(1 + \dfrac{\phi_2}{1 + \tau_2\phi_2}J_{22}^C(E_T^*)\bigg) - \dfrac{\phi_1}{1 + \tau_1\phi_1}J_{12}^C(E_T^*)\dfrac{\phi_2}{1 + \tau_2\phi_2}J_{21}^C(E_T^*).
\end{split}
\end{equation*}
By some algebraic manipulations, we obtain
\begin{equation}\label{eq:12}
\begin{split}
&\det(J^D(E_T^*)) = 1 + \dfrac{\phi_1}{1 + \tau_1\phi_1}J_{11}^C(E_T^*) + \dfrac{\phi_2}{1 + \tau_2\phi_2}J_{22}^C(E_T^*) + \dfrac{\phi_1\phi_2}{(1 + \tau_1\phi_1)(1 + \tau_1\phi_2)}\det(J^C(E_T^*)),\\
&1 - \det(J^D(E_T^*)) + Tr(J^D(E_T^*)) = \dfrac{\phi_1\phi_2}{(1 + \tau_1\phi_1)(1 + \tau_2\phi_2)}\det(J^C(E_T^*)),\\
&1 + \det(J^D(E_T^*)) + Tr(J^D(E_T^*)) = 4 + 2\dfrac{\phi_1}{1 + \tau_1\phi_1}J_{11}^C(E_T^*) + 2\dfrac{\phi_2}{1 + \tau_2\phi_2}J_{22}^C(E_T^*)\\
&+ \dfrac{\phi_1\phi_2}{(1 + \tau_1\phi_1)(1 + \tau_2\phi_2)}\det(J^C(E_T^*)).
\end{split}
\end{equation} 
We will show that
\begin{equation}\label{eq:13}
\begin{split}
&\det(J^D(E_T^*)) < 1,\\
&1 - \det(J^D(E_T^*)) + Tr(J^D(E_T^*))  > 0,\\
&1 + \det(J^D(E_T^*)) + Tr(J^D(E_T^*)) > 0.
\end{split}
\end{equation}

First, it is clear that $1 - \det(J^D(E_T^*)) + Tr(J^D(E_T^*))  > 0$ since $\det(J^D(E_T^*)) > 0$.

Second, using the first formula of \eqref{eq:12} gives
\begin{equation*}
\det(J^D(E_T^*)) - 1 = \dfrac{\phi_1J_{11}^C(E_T^*) + \phi_2J_{22}^C(E_T^*) + \phi_1\phi_2\big(\tau_2J_{11}^C(E_T^*) + \tau_1J_{22}^C(E_T^*) + \det(J^C(E_T^*))\big)}{(1 + \tau_1\phi_1)(1 + \tau_2\phi_2)},
\end{equation*}
which implies that $\det(J^D(E_T^*)) < 1$ if the first condition of \eqref{eq:DC1} holds.

Third, if follows from the third formula of \eqref{eq:12} that
\begin{equation*}
\begin{split}
1 + \det(J^D(E_T^*)) + Tr(J^D(E_T^*)) &= \dfrac{4 + \phi_1\phi_2\det(J^C(E_T^*))}{(1 + \tau_1\phi_1)(1 + \tau_2\phi_2)}\\
&+ \dfrac{2\phi_1\big(2\tau_1 + J_{11}^C(E_T^*)\big) + 2\phi_2\big(2\tau_2 + J_{22}^C(E_T^*)\big)}{(1 + \tau_1\phi_1)(1 + \tau_2\phi_2)}\\
&+ \dfrac{\phi_1\phi_2\big(4\tau_1\tau_2 + 2\tau_2J_{11}^C(E_T^*) + 2\tau_1J_{22}^C(E_T^*)\big)}{(1 + \tau_1\phi_1)(1 + \tau_2\phi_2)}.
\end{split}
\end{equation*}
From this, $1 + \det(J^D(E_T^*)) + Tr(J^D(E_T^*)) > 0$ if the last three conditions of \eqref{eq:DC1} are satisfied.

Thus, we have shown that \eqref{eq:13} occurs whenever \eqref{eq:DC1} holds. From a direct consequence of the Jury conditions or Schur-Cohn criteria \cite[Theorem 2.13]{Allen}, we conclude that the two eigenvalues $\widehat{\lambda}_1$ and $\widehat{\lambda}_2$ of $P_{J^D(E_T^*)}(\lambda)$ are strictly inside the unit circle, that is $|\widehat{\lambda}_i| < 1$. By the linearized method \cite[Theorem 1.3.7]{Stuart}, the asymptotic stability of $E_T^*$ is confirmed. The proof is complete.\\
\textbf{Proof of Part (ii).} Note that $\det(J^C(E_T^*)) < 0$ if $\mathcal{R}_0 > 1$. Hence, it follows from the second formula of \eqref{eq:12} that $1 - \det(J^D(E_T^*)) + Tr(J^D(E_T^*)) < 0$ if $\mathcal{R}_0 > 1$. Based on \cite[Theorem 2.13]{Allen} and \cite[Theorem 1. 3. 7]{Stuart}, we conclude that $E_T^*$ is unstable whenever $\mathcal{R}_0 > 1$. The proof is complete.
\end{proof}

In the following lemma, we simplify \eqref{eq:DC1} to a simpler system of conditions.
\begin{lemma}\label{Lemma1}
The system \eqref{eq:DC1} is satisfied whenever
\begin{equation}\label{eq:DC1new}
\begin{split}
&\tau_2\big(-J_{11}^C(E_T^*)\big) + \tau_1\big(-J_{22}^C(E_T^*)\big) \geq \det(J^C(E_T^*)),\\
&\tau_1\geq \tau_1^* := -J_{11}^C(E_T^*),\\
&\tau_2 \geq \tau_2^* := -J_{22}^C(E_T^*).
\end{split}
\end{equation}
\end{lemma}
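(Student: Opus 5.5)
We need to show that \eqref{eq:DC1new} implies each of the four conditions in \eqref{eq:DC1}. The first condition of \eqref{eq:DC1} coincides with the first condition of \eqref{eq:DC1new}, so nothing needs to be done there. The argument uses only the sign information $J_{11}^C(E_T^*) < 0$ and $J_{22}^C(E_T^*) < 0$. These hold for all parameter values, since by \eqref{eq:9a} $J_{11}^C(E_T^*) = -(\alpha/\beta+\mu)$ and $J_{22}^C(E_T^*) = -(\mu+F)$. Consequently $\tau_1^*, \tau_2^* > 0$.

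\textbf{Second and third conditions.} Since $-J_{11}^C(E_T^*) > 0$, we have
\[
\tau_1 \geq -J_{11}^C(E_T^*) \geq -\dfrac{J_{11}^C(E_T^*)}{2}.
\]
The same reasoning with $J_{22}^C(E_T^*)$ gives the bound for $\tau_2$. In particular $\tau_1, \tau_2 > 0$.

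\textbf{Fourth condition.} We need $C_0 \geq 0$. Write $a := -J_{11}^C(E_T^*) > 0$ and $b := -J_{22}^C(E_T^*) > 0$. Then
\[
C_0 = 4\tau_1\tau_2 - 2a\tau_2 - 2b\tau_1 = 2\tau_2(\tau_1 - a) + 2\tau_1(\tau_2 - b).
\]
Under \eqref{eq:DC1new} we have $\tau_1 - a \geq 0$ and $\tau_2 - b \geq 0$, and both $\tau_1,\tau_2$ are positive. Hence both summands are nonnegative and $C_0 \geq 0$.

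This splitting of $C_0$ is the only step requiring any idea, and it is elementary. Together with the unchanged first condition, all four inequalities of \eqref{eq:DC1} hold whenever \eqref{eq:DC1new} holds. Theorem~\ref{Theorem2}(i) therefore applies under the simpler system \eqref{eq:DC1new}.
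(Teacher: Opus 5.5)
Your proof is correct and follows the paper's argument. The second and third conditions come from the negativity of the diagonal entries, and $C_0\geq 0$ comes from the same splitting $C_0 = 2\tau_2\big(\tau_1 + J_{11}^C(E_T^*)\big) + 2\tau_1\big(\tau_2 + J_{22}^C(E_T^*)\big)$ that the paper uses; its preliminary $\tau_2 = q\tau_1$ digression is not needed, and reading the signs directly from \eqref{eq:9a} rather than \eqref{eq:9} shows the implication does not require $\mathcal{R}_0<1$.
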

\begin{proof}
First, we deduce from \eqref{eq:9} that \eqref{eq:DC1new} implies the first three conditions of \eqref{eq:DC1}. We only need to show that \eqref{eq:DC1new} implies the last condition of \eqref{eq:DC1}. Indeed, by setting $\tau_2 = q\tau_1$ with $q > 0$, the last condition of \eqref{eq:DC1} becomes
\begin{equation}
\begin{split}
C_0 &= 4\tau_1\tau_2 + 2\tau_2J_{11}^C(E_T^*) + 2\tau_1J_{22}^C(E_T^*)\\
&= 4q\tau_1^2 + 2q\tau_1J_{11}^C(E_T^*) + 2\tau_1J_{22}^C(E_T^*)\\
&= 2\tau_1\big(2q\tau_1 + q J_{11}^C(E_T^*) + J_{22}^C(E_T^*)\big),
\end{split}
\end{equation}
which implies that $C_0 \geq 0$ whenever $2q\tau_1 + q J_{11}^C(E_T^*) + J_{22}^C(E_T^*) \geq 0$. This is equivalent to
\begin{equation*}
\tau_1 \geq \tau_1^* := -\dfrac{qJ_{11}^C(E_T^*) + J_{22}^C(E_T^*)}{2q}.
\end{equation*}
This means that there always exist $\tau_1, \tau_2 > 0$ for which $C_0 \geq 0$. In particular, the last condition of \eqref{eq:DC1} can be rewritten in the form
\begin{equation*}
C_0 = 4\tau_1\tau_2 + 2\tau_2J_{11}^C(E_T^*) + 2\tau_1J_{22}^C(E_T^*) = 2\tau_1\big(\tau_2 + J_{22}^C(E_T^*)\big) + 2\tau_2\big(\tau_1 + J_{11}^C(E_T^*)\big),
\end{equation*}
which implies that $C_0 \geq 0$ if the last two conditions \eqref{eq:DC1new} hold. Consequently, the proof is complete.
\end{proof}

From Theorem \ref{Theorem2} and Lemma \ref{Lemma2}, we obtain a simple condition for \eqref{eq:NSFD} to be stability-preserving.
\begin{theorem}[Stability analysis of the trivial equilibrium point under a simplified condition]\label{Theorem3}
If $\mathcal{R}_0 < 1$ and $\tau_1$ and $\tau_2$ satisfies \eqref{eq:DC1new}, then the trivial equilibrium point $E_T^*$ of \eqref{eq:NSFD} is asymptotically stable. Furthermore, it is always unstable for any $\tau_1, \tau_2 > 0$ whenever $\mathcal{R}_0 > 1$.
\end{theorem}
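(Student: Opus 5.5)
The plan is to obtain Theorem \ref{Theorem3} as a direct combination of Theorem \ref{Theorem2} and Lemma \ref{Lemma1}. The citation of Lemma \ref{Lemma2} before the statement looks like a slip: Lemma \ref{Lemma2} concerns equilibrium sets, whereas the reduction needed here is Lemma \ref{Lemma1}.

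\textbf{First assertion.} Assume $\mathcal{R}_0 < 1$ and that $\tau_1, \tau_2$ satisfy \eqref{eq:DC1new}. By \eqref{eq:9}, $-J_{11}^C(E_T^*) > 0$ and $-J_{22}^C(E_T^*) > 0$. The bounds $\tau_1 \geq -J_{11}^C(E_T^*)$ and $\tau_2 \geq -J_{22}^C(E_T^*)$ therefore give the second and third lines of \eqref{eq:DC1}, each with a factor $2$ to spare. The first line of \eqref{eq:DC1} is copied verbatim from \eqref{eq:DC1new}. For $C_0$, I will use the splitting already displayed in the proof of Lemma \ref{Lemma1}:
\[
C_0 = 2\tau_1\big(\tau_2 + J_{22}^C(E_T^*)\big) + 2\tau_2\big(\tau_1 + J_{11}^C(E_T^*)\big).
\]
Both brackets are nonnegative under \eqref{eq:DC1new}, and $\tau_1, \tau_2 > 0$, so $C_0 \geq 0$. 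Hence \eqref{eq:DC1} holds, and part (i) of Theorem \ref{Theorem2} gives the asymptotic stability of $E_T^*$ for \eqref{eq:NSFD}, for every $h > 0$.

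\textbf{Second assertion.} Let $\mathcal{R}_0 > 1$ and take any $\tau_1, \tau_2 > 0$. Part (ii) of Theorem \ref{Theorem2} already makes no assumption on the weights, so it applies as is. To confirm this, I will re-check its key identity, the second formula of \eqref{eq:12}:
\[
1 - \det(J^D(E_T^*)) + Tr(J^D(E_T^*)) = \Psi_1\Psi_2 \det(J^C(E_T^*)), \qquad \Psi_i = \frac{\phi_i}{1 + \tau_i\phi_i} > 0.
\]
\textbf{Main obstacle.} This identity is the one step I will verify directly. With $J^D = \mathbb{I} + \mathrm{diag}(\Psi_1,\Psi_2)J^C$, a $2\times 2$ expansion gives
\[
\det J^D = 1 + \Psi_1 J_{11}^C + \Psi_2 J_{22}^C + \Psi_1\Psi_2\det J^C, \qquad Tr\,J^D = 2 + \Psi_1 J_{11}^C + \Psi_2 J_{22}^C.
\]
Then $1 - \det J^D + Tr\,J^D$ simplifies to $2 - \Psi_1\Psi_2\det J^C$. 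So the displayed identity holds only for $1 - \det + Tr$ read as the Jury quantity $P(1) = 1 - Tr + \det$; that is, $P(1) = \Psi_1\Psi_2\det J^C$.

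For $\mathcal{R}_0 > 1$ we have $\det J^C(E_T^*) < 0$, so $P(1) < 0$. Since $P(\lambda) \to +\infty$ as $\lambda \to +\infty$, the characteristic polynomial has a real root $\widehat{\lambda} > 1$. The linearized instability criterion \cite[Theorem 1.3.7]{Stuart} then shows that $E_T^*$ is unstable. This argument uses only $\Psi_i > 0$, so it holds for every $h > 0$ and every $\tau_1, \tau_2 > 0$.
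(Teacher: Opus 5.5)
Your proposal is correct and follows the paper's own route. The theorem is Lemma~\ref{Lemma1} (the paper's reference to Lemma~\ref{Lemma2} is indeed a slip) feeding \eqref{eq:DC1} into part (i) of Theorem~\ref{Theorem2}, while part (ii) of Theorem~\ref{Theorem2} already holds for all $\tau_1,\tau_2>0$. Your recomputation is also right: the second line of \eqref{eq:12} really describes $P(1)=1-Tr(J^D(E_T^*))+\det(J^D(E_T^*))=\Psi_1\Psi_2\det(J^C(E_T^*))$, and your intermediate-value step, which produces a real eigenvalue $\widehat{\lambda}>1$, makes the instability conclusion more explicit than the paper's appeal to the Jury criterion.
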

\begin{remark}
From the proof of Lemma \ref{Lemma1}, we conclude that there exist positive real numbers $\tau_1^*$ and $\tau_2^*$ with the property that the system \eqref{eq:DC1} holds whenever $\tau_1 \geq \tau_1^*$ and $\tau_2 \geq \tau_2^*$.
\end{remark}
\subsection{Stability analysis of the positive equilibrium point}
This subsection investigates the asymptotic stability of the positive equilibrium point $E_P^*$ with respect to the NSFD model \eqref{eq:NSFD}. Assume that $E_P^*$ exists, that is $\mathcal{R}_0 > 1$. We observe from \eqref{eq:9} and \eqref{eq:10} that $J^C(E_T^*)$ in \eqref{eq:9a} and $J^C(E_P^*)$ in \eqref{eq:10a} share the same characteristic. Therefore, the stability analysis of the positive equilibrium point $E_P^*$ can be carried out similarly to that for the trivial equilibrium point $E_T^*$.

Based on  the mathematical analysis presented in Subsection \ref{Subsec2.2}, we establish the asymptotic stability of $E_P^*$ as follows.

\begin{theorem}[Stability analysis of the positive equilibrium point]\label{Theorem3}
Assume that $\mathcal{R}_0 > 1$. Let $\tau_1$ and $\tau_2$ be real numbers with the property that
\begin{equation}\label{eq:DC2}
\begin{split}
&\tau_2\big(-J_{11}^C(E_P^*)\big) + \tau_1\big(-J_{22}^C(E_P^*)\big) \geq \det(J^C(E_P^*)),\\
&\tau_1 \geq -\dfrac{J_{11}^C(E_P^*)}{2},\\
&\tau_2 \geq -\dfrac{J_{22}^C(E_P^*)}{2},\\
&C_*:= 4\tau_1\tau_2 + 2\tau_2J_{11}^C(E_P^*) + 2\tau_1J_{22}^C(E_P^*) \geq 0.
\end{split}
\end{equation}
\end{theorem}
%
\begin{theorem}[Stability analysis of the positive  equilibrium point under a simplified condition]\label{Theorem4}
Support that $\mathcal{R}_0 > 1$ and $\tau_1$ and $\tau_2$ are positive real numbers satisfying
\begin{equation}\label{eq:DC2new}
\begin{split}
&\tau_2\big(-J_{11}^C(E_P^*)\big) + \tau_1\big(-J_{22}^C(E_P^*)\big) \geq \det(J^C(E_P^*)),\\
&\tau_1\geq \widetilde{\tau}_1^* := -J_{11}^C(E_P^*),\\
&\tau_2 \geq \widetilde{\tau}_2^* := -J_{22}^C(E_P^*).
\end{split}
\end{equation}
Then the positive  equilibrium point $E_P^*$ of \eqref{eq:NSFD} is asymptotically stable.
\end{theorem}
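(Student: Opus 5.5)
The plan is to mirror, at $E_P^*$, the combination of Theorem \ref{Theorem2}(i) and Lemma \ref{Lemma1} that was used for $E_T^*$. The only input from the continuous model will be the sign information \eqref{eq:10}, which is valid because $\mathcal{R}_0>1$:
\[
\det(J^C(E_P^*))>0,\qquad J^C_{11}(E_P^*)<0,\qquad J^C_{22}(E_P^*)<0.
\]

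\textbf{Step 1 (reduction).} First I will show that \eqref{eq:DC2new} implies \eqref{eq:DC2}, repeating the proof of Lemma \ref{Lemma1} with $E_T^*$ replaced by $E_P^*$.
\begin{itemize}
\item The first inequality of \eqref{eq:DC2} is identical to the first inequality of \eqref{eq:DC2new}.
\item The conditions $\tau_1\geq -J^C_{11}(E_P^*)$ and $\tau_2\geq -J^C_{22}(E_P^*)$ imply the halved conditions, because both $-J^C_{ii}(E_P^*)$ are positive.
\item For the last condition I will use the splitting
\[
C_*=2\tau_1\big(\tau_2+J^C_{22}(E_P^*)\big)+2\tau_2\big(\tau_1+J^C_{11}(E_P^*)\big)\geq 0.
\]
\end{itemize}

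\textbf{Step 2 (linearisation).} Linearise the scheme \eqref{eq:NSFD1} at $E_P^*$. Write $\Psi_i=\phi_i/(1+\tau_i\phi_i)$, where $\phi_i$ is evaluated at $E_P^*$. Then
\[
J^D(E_P^*)=\mathbb{I}+\mathrm{diag}(\Psi_1,\Psi_2)\,J^C(E_P^*).
\]
The algebraic identities \eqref{eq:12} then hold verbatim with $E_T^*$ replaced by $E_P^*$:
\begin{itemize}
\item $1-\det J^D+\mathrm{Tr}\,J^D=\Psi_1\Psi_2\det J^C>0$.
\item $\det J^D-1$ equals a quotient whose numerator is $\phi_1J_{11}^C+\phi_2J_{22}^C+\phi_1\phi_2\big(\tau_2J_{11}^C+\tau_1J_{22}^C+\det J^C\big)$. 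The first two terms are strictly negative and the bracket is $\le 0$ by the first condition, so $\det J^D<1$.
\item $1+\det J^D+\mathrm{Tr}\,J^D$ has, over the positive denominator $(1+\tau_1\phi_1)(1+\tau_2\phi_2)$, the numerator $4+\phi_1\phi_2\det J^C+2\phi_1(2\tau_1+J^C_{11})+2\phi_2(2\tau_2+J^C_{22})+\phi_1\phi_2 C_*$. Every term is nonnegative by the conditions and the constant $4$ is positive, so this quantity is strictly positive.
\end{itemize}

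\textbf{Step 3 (conclusion).} The three inequalities in Step 2 are the Jury conditions. By \cite[Theorem 2.13]{Allen}, both eigenvalues of $J^D(E_P^*)$ lie strictly inside the unit circle. By the linearized stability theorem \cite[Theorem 1.3.7]{Stuart}, $E_P^*$ is then asymptotically stable for \eqref{eq:NSFD}. This holds for every $h>0$, since only $\phi_i>0$ was used.

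The only real care needed is in Step 2. In the paper's \eqref{eq:12} there is an index typo ($1+\tau_1\phi_2$), and \eqref{eq:NSFD1} shows $\phi_1$ in the numerator of the second line. I will recompute the determinant with the correct $\Psi_2=\phi_2/(1+\tau_2\phi_2)$ and confirm that the sign arguments are unaffected. Otherwise the argument is a direct transcription of the $E_T^*$ case.
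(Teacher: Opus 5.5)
Your proposal is correct and is essentially the paper's own argument. The paper gives no separate proof for $E_P^*$ and only says that the analysis of Subsection~\ref{Subsec2.2} carries over; that is exactly your transfer of the splitting of $C_*$ from Lemma~\ref{Lemma1} and of the Jury computation from Theorem~\ref{Theorem2}(i), using only the signs in \eqref{eq:10}. Two small points: the quantity equal to $\Psi_1\Psi_2\det J^C(E_P^*)$ is $1-\mathrm{Tr}\,J^D+\det J^D$ (the characteristic polynomial at $\lambda=1$), not $1-\det J^D+\mathrm{Tr}\,J^D$, which is a typo you inherited from the paper; and, like the paper, you tacitly need $\phi_i(h,\cdot,\cdot)$ to be continuous at $E_P^*$ for $J^D(E_P^*)=\mathbb{I}+\mathrm{diag}(\Psi_1,\Psi_2)J^C(E_P^*)$ to hold, since only then is the variation of $\Psi_i$ killed by $f_i(E_P^*)=0$.
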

\begin{remark}
By summarizing the results established in this section, we obtain thresholds of dynamic consistency for the proposed method \eqref{eq:NSFD}. More precisely, there exists positive real numbers $\tau_{DC1}$ and $\tau_{DC2}$ such that \eqref{eq:NSFD} is dynamically consistent with respect to the properties \textbf{$P_1$})-\textbf{$P_3$}) of \eqref{eq:1} whenever
\begin{equation*}
\tau_1 \geq \tau_{DC1}, \qquad \tau_2 \geq \tau_{DC2}.
\end{equation*}
Moreover, $\tau_{DC1}$ and $\tau_{DC2}$ can be computed easily.
\end{remark}
\subsection{Convergence analysis}
In this subsection, we determine conditions guaranteeing that the NSFD scheme \eqref{eq:NSFD} is convergent of order $2$.
\begin{theorem}\label{Theorem4}
Let $\phi_1(h, x, y)$ and $\phi_2(h, x, y)$ be positive denominator functions that satisfy
\begin{equation}\label{eq:2ndNSFD}
\begin{split}
\dfrac{\partial^2 \phi_1(h, x, y)}{\partial h^2}\bigg|_{h = 0} &= g_1(x, y)\\
&:= 2\tau_1 + \dfrac{-\bigg(\dfrac{\alpha}{(\beta + x)^2} + \mu\bigg)\bigg(\delta y- \dfrac{\alpha x}{\beta + x} - \mu x\bigg) + \delta\bigg(\dfrac{\alpha x}{\beta + x} - \mu y - Fy\bigg)}{\delta y - \dfrac{\alpha x}{\beta + x}-\mu x},\\
\dfrac{\partial^2 \phi_2}{\partial h^2}(0, x, y)\bigg|_{h = 0}  &= g_2(x, y)\\
&:= 2\tau_2 + \dfrac{\dfrac{\alpha}{(\beta + x)^2}\bigg(\delta y  - \dfrac{\alpha x}{\beta + x} - \mu x\bigg) - (\mu + F)\bigg(\dfrac{\alpha x}{\beta + x} - \mu y - Fy\bigg)}{\dfrac{\alpha x}{\beta + x}-(\mu+F) y}
\end{split}
\end{equation}
for all $(x, y) \geq \mathbb{R}_+^2$ such that $f_i(x, y) \ne 0$ for $i = 1, 2$. Then, the truncation error of the NSFD scheme \eqref{eq:NSFD} is $\mathcal{O}(h^3)$, i.e., it is consistent of order $2$.
\end{theorem}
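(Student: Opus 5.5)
The plan is to argue exactly as in the proof of Theorem \ref{Theorem2ndNSFD}, applied to the explicit form \eqref{eq:NSFD1} of the scheme \eqref{eq:NSFD}. By \eqref{eq:7}, one step of \eqref{eq:NSFD} reads
\begin{equation*}
x_{k+1} = x_k + \Psi_1(h, x_k, y_k) f_1(x_k, y_k), \qquad y_{k+1} = y_k + \Psi_2(h, x_k, y_k) f_2(x_k, y_k), \qquad \Psi_i := \dfrac{\phi_i}{1 + \tau_i \phi_i}.
\end{equation*}
In the second line of \eqref{eq:NSFD1} the numerator should be $\phi_2$. The local truncation error is obtained by inserting the exact solution $(x(t_k), y(t_k))$ into this map and comparing $R_i(h, x(t_k), y(t_k))$ with $x(t_{k+1})$, respectively $y(t_{k+1})$, up to $\mathcal{O}(h^3)$.

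\textbf{Step 1 (expand the discrete map).} I use the standing assumption $\phi_i(h, x, y) = h + \mathcal{O}(h^2)$, which gives $\phi_i(0, \cdot) = 0$ and $\partial_h \phi_i(0, \cdot) = 1$. Then
\begin{equation*}
\phi_i = h + \tfrac12 g_i h^2 + \mathcal{O}(h^3), \qquad
\Psi_i = \phi_i\big(1 - \tau_i\phi_i + \mathcal{O}(\phi_i^2)\big) = h + \big(\tfrac12 g_i - \tau_i\big)h^2 + \mathcal{O}(h^3).
\end{equation*}
Hence $R_i(h) = u_i + f_i h + \tfrac12 f_i (g_i - 2\tau_i) h^2 + \mathcal{O}(h^3)$. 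This is the analogue of \eqref{eq:GNSFD9}.

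\textbf{Step 2 (expand the exact solution).} By the chain rule, $x'' = \partial_x f_1 \, f_1 + \partial_y f_1 \, f_2$ and $y'' = \partial_x f_2 \, f_1 + \partial_y f_2 \, f_2$. The partial derivatives are
\begin{equation*}
\partial_x f_1 = -\dfrac{\alpha\beta}{(\beta+x)^2} - \mu, \quad \partial_y f_1 = \delta, \quad \partial_x f_2 = \dfrac{\alpha\beta}{(\beta+x)^2}, \quad \partial_y f_2 = -(\mu + F).
\end{equation*}
The numerators in \eqref{eq:2ndNSFD} are exactly $x''$ and $y''$ written in this way. I will read the factor $\alpha/(\beta+x)^2$ there as $\alpha\beta/(\beta+x)^2$, consistent with \eqref{eq:10a}, and I note that the correct general formula has $\partial f_i/\partial u_j$ in place of $\partial f_j/\partial u_j$ in \eqref{eq:2ndGNSFD}. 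Consequently, $f_i (g_i - 2\tau_i)$ equals the $i$-th component of $u''$ wherever $f_i \neq 0$.

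\textbf{Step 3 (compare).} Subtracting the two expansions, the $h^0$, $h^1$ and $h^2$ terms cancel. This leaves $x(t_{k+1}) - R_1 = \mathcal{O}(h^3)$ and $y(t_{k+1}) - R_2 = \mathcal{O}(h^3)$, which is consistency of order $2$.

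The main obstacle is the degenerate set where $f_i(x, y) = 0$, where $g_i$ is not defined, together with the uniformity of the $\mathcal{O}(h^3)$ remainders. For the degenerate set, I will observe that the $h^2$ coefficient of $R_i$ is $\tfrac12 f_i(g_i - 2\tau_i)$, which equals $\tfrac12 u_i''$ off the zero set. So it suffices that $\phi_i$ has bounded $h$-derivatives up to order three near such points; this holds, for instance, for the choice \eqref{eq:18} since $D_i = g_i$ there. At points with $f_i = 0$ both sides then reduce to $u_i + \mathcal{O}(h^2)$ terms that agree up to $\mathcal{O}(h^3)$, because there $u_i'' = 0$ as well. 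For uniformity, I will restrict to the compact set containing the exact trajectory on $[0, T]$, which is bounded and stays in $\Omega$, and assume that the third $h$-derivatives of $\phi_i$ are bounded there.
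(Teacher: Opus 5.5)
Your Steps 1--3 are the paper's proof: rewrite the scheme as $u_i+\Psi_i f_i$, get $\partial_h^2R_i(0,\cdot)=f_i\big(\partial_h^2\phi_i(0,\cdot)-2\tau_i\big)$, Taylor-expand the exact solution, and match coefficients. This part is correct, and so are your three corrections: $\phi_2$ in the second line of \eqref{eq:NSFD1}, $\alpha\beta/(\beta+x)^2$ in $g_1,g_2$, and $\partial f_i/\partial u_j$ in \eqref{eq:2ndGNSFD}. Steps 1--3 already prove the theorem in the only sense the paper's proof gives. That sense is pointwise: for the $i$-th component, at points where $f_i(x,y)\neq 0$, with an $\mathcal{O}(h^3)$ constant that depends on the point.

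Your closing paragraph on the ``main obstacle'', however, is wrong and should be removed.

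\begin{itemize}
\item \emph{$u_i''$ need not vanish where $f_i$ does.} Since $u_i''=\sum_j(\partial f_i/\partial u_j)f_j$, on $\{f_1=0\}$ one has $x''=\delta f_2$, and on $\{f_2=0\}$ one has $y''=\alpha\beta f_1/(\beta+x)^2$. Both are nonzero away from equilibria.
\item \emph{The choice \eqref{eq:18} does not have bounded $h$-derivatives near such points.} There $\partial_h^2\phi_i(0,\cdot)=D_i=g_i=2\tau_i+u_i''/f_i$ is unbounded. The same holds for every $\phi_i$ satisfying \eqref{eq:2ndNSFD}. So your uniformity assumption contradicts the hypothesis on any compact set meeting $\{f_i=0,\ u_i''\neq 0\}$. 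This happens, for example, whenever a component of the trajectory has a nondegenerate turning point on $[0,T]$.
\end{itemize}

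In fact the defect is structural, and no denominator can repair it. Since $0<\Psi_i<1/\tau_i$ for every $h$, the discrete increment $\Psi_i f_i$ vanishes on $\{f_i=0\}$ and tends to $0$ as one approaches it. The exact increment, by contrast, is $\tfrac12 u_i''h^2+\mathcal{O}(h^3)$. Hence at points with $f_i=0\neq u_i''$ the local error is of exact order $h^2$. For \eqref{eq:18}, the $h^3$-coefficient of the error near the nullcline grows in magnitude like $(u_i'')^2/(6|f_i|)$. Uniform $\mathcal{O}(h^3)$ consistency is therefore false. State the result pointwise off the nullclines, as the paper implicitly does.
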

\begin{proof}
First, let us denote
\begin{equation*}
\Psi_i(h, x, y) = \dfrac{\phi_i(h, x, y)}{1 + \tau_i\phi_i(x, y, h)}, \quad x, y \geq 0
\end{equation*}
for $i = 1, 2$. Then, \eqref{eq:NSFD} can be represented in the form
\begin{equation}\label{eq:14}
\begin{split}
&x_{k + 1} = x_k + \Psi_1(h, x_k, y_k)f_1(x_k, y_k) := R_1(h, x_k, y_k),\\
&y_{k + 1} = y_k + \Psi_2(h, x_k, y_k)f_2(x_k, y_k) := R_2(h, x_k, y_k),
\end{split}
\end{equation} 
where $f(x, y)$ and $g(x, y)$ are defined in \eqref{eq:1}. It is easy to verify that
\begin{equation}\label{eq:15}
\begin{split}
&R_1(0, x, y) = x,\\
&R_2(0, x, y) = y),\\
&\dfrac{\partial R_1(h, x, y)}{\partial h}\bigg|_{h = 0} = f_1(x, y),\\
&\dfrac{\partial R_2(h, x, y)}{\partial h}\bigg|_{h = 0} = f_2(x, y),\\
&\dfrac{\partial^2 R_1(h, x, y)}{\partial h^2}\bigg|_{h = 0} = f_1(x, y)\bigg(\dfrac{\partial^2 \phi_i(h, x, y)}{\partial h^2}\Big|_{h = 0} - 2\tau_1\bigg),\\
&\dfrac{\partial^2 R_2(h, x, y)}{\partial h^2}\bigg|_{h = 0} = f_2(x, y)\bigg(\dfrac{\partial^2 \phi_2(h, x, y)}{\partial h^2}\Big|_{h = 0} - 2\tau_2\bigg).
\end{split}
\end{equation}
Using Taylor's expansion in combination with \eqref{eq:14} and \eqref{eq:15} gives:\\
\begin{equation}\label{eq:16}
\begin{split}
x_{k + 1} &= R_1(h, x_k, y_k) = R_1(0, x_k, y_k) + \dfrac{\partial R_1(h, x, y)}{\partial h}\bigg|_{h = 0}h + \dfrac{\partial^2 R_1(h, x, y)}{\partial h^2}\bigg|_{h = 0} \dfrac{h^2}{2} + \mathcal{O}(h^3)\\
&= x_k + f_1(x_k, y_k)h + f_1(x, y)\bigg(\dfrac{\partial^2 \phi_1(h, x, y)}{\partial h^2}\bigg|_{h = 0} - 2\tau_1\bigg)\dfrac{h^2}{2} + \mathcal{O}(h^3),\\
y_{k + 1} &= R_2(h, x_k, y_k) = R_2(0, x_k, y_k) + \dfrac{\partial R_2(h, x, y)}{\partial h}\bigg|_{h = 0}h + \dfrac{\partial^2 R_2(h, x, y)}{\partial h^2}\bigg|_{h = 0} \dfrac{h^2}{2} + \mathcal{O}(h^3)\\
&= y_k + f_2(x_k, y_k)h + f_2(x, y)\bigg(\dfrac{\partial^2 \phi_2(h, x, y)}{\partial h^2}\bigg|_{h = 0} - 2\tau_2\bigg)\dfrac{h^2}{2} + \mathcal{O}(h^3).
\end{split}
\end{equation}
On the other hand, using Taylor's expansion for the exact solution at $t_k$ we obtain
\begin{equation}\label{eq:17}
\begin{split}
&x(t_{k + 1}) = x(t_k + h) = x(t_k) + x'(t_k)h + x''(t_k)\dfrac{h^2}{2} + \mathcal{O}(h^3),\\
&= x(t_k) + f_1\big(x(t_k),\,y(t_k)\big)h\\
&+ \bigg[-\bigg(\dfrac{\alpha}{(\beta + x(t_k))^2} + \mu\bigg)\bigg(\delta y(t_k) - \dfrac{\alpha x(t_k)}{\beta + x(t_k)} - \mu x(t_k)\bigg) + \delta\bigg(\dfrac{\alpha x(t_k)}{\beta + x(t_k)} - \mu y(t_k) - Fy(t_k)\bigg)\bigg]\dfrac{h^2}{2} + \mathcal{O}(h^3),\\
&y(t_{k + 1}) = y(t_k + h) = y(t_k) + y'(t_k)h + y''(t_k)\dfrac{h^2}{2} + \mathcal{O}(h^3),\\
&= y(t_k) + f_2\big(x(t_k),\,y(t_k)\big)h\\
&+ \bigg[\dfrac{\alpha}{\beta + x(t_k)}\bigg(\delta  - \dfrac{\alpha x(t_k)}{\beta + x(t_k)} - \mu x(t_k)\bigg) - (\mu + F)\bigg(\dfrac{\alpha}{\beta + x(t_k)} - \mu y(tk) - Fy(t_k)\bigg)\bigg]\dfrac{h^2}{2} + \mathcal{O}(h^3).
\end{split}
\end{equation}
Thus, it follows from \eqref{eq:16} and \eqref{eq:17} that if \eqref{eq:2ndNSFD} holds, then
\begin{equation*}
x(t_{k + 1}) - x_{k+1} = \mathcal{O}(h^3), \quad y(t_{k + 1}) - y_{k+1} = \mathcal{O}(h^3).
\end{equation*}
This is the desired conclusion and completes the proof.
\end{proof}
\section{Numerical experiments}\label{Sec4}
In this section, we conduct numerical examples to support the theoretical results. In the numerical examples reported below, the second-order NSFD scheme \eqref{eq:NSFD} with fixed weights $\tau_1$ and $\tau_2$ will be denote by $(\tau_1,\,\tau_2)$-2ndNSFD for simplicity. Also, the denominator functions $\phi_i(h)$ defined in \eqref{eq:18} will be used. 
\begin{example}[An error analysis of the NSFD scheme]
In this example, we provide an error analysis for the constructed second-order NSFD scheme \eqref{eq:NSFD}. To end this, we consider \eqref{eq:1} with the following set of the parameters:
\begin{equation*}\label{eq:19}
\alpha = 20, \quad \beta = 10, \quad \mu = 0.897, \quad F = 4.55 - \mu, \quad \delta = 1.05,
\end{equation*}
and the initial data $x(0) = 100$ and $y(0) = 90$.

For this set, we obtain $\mathcal{R}_0 = -0.2177 < 1$. Hence, the trivial equilibrium point $E_T^* = (0,\,0)$ is asymptotically stable. By some algebraic manipulations, \eqref{eq:DC1new} is simplified to
\begin{equation*}
\begin{split}
&\tau_1 \geq 2.8970,\\
&\tau_2 \geq 4.5500,\\
&2.8970 \tau_1 + 4.5500 \tau_2 \geq 11.0814.
\end{split}
\end{equation*}
Consequently, we can choose $(\tau_1,\,\tau_2) = (2.8970,\,4.5500)$.

To estimate errors generated by the NSFD scheme \eqref{eq:NSFD} over the interval $[0,\,1]$, we admit the numerical approximation, which is obtained by applying a $11$-stage Runge-Kutta method of order $8$ (see \cite{Copper}) with a step size $h = 10^{-6}$ as a reference solution. Then, the errors are computer as

\begin{equation*}
\begin{split}
&err_M = \max_{k}e_k, \quad e_k := |x_k - x(t_k)| + |y_k - y(t_k)|, \,\,\, t_k = kh,\,\,\,h=\dfrac{1}{N},\,\,\,0 \leq k \leq N,\\
&err_F = e_N, \\
&err_A = \dfrac{\mathlarger{\sum}_{k = 1}^Ne_k}{N}.
\end{split}
\end{equation*}
Besides, the rate of convergence (ROC) is estimated  by (see \cite{Ascher}):
\begin{equation*}
ROC := \log_{\bigg(\dfrac{h_1}{h_2}\bigg)}\bigg(\dfrac{err_F(h_1)}{err_F(h_2)}\bigg).
\end{equation*}

The errors and ROC corresponding to the NSFD scheme \eqref{eq:NSFD} with some different values of $\tau_1$ and $\tau_2$ are reported in Table \ref{Table1}--\ref{Table3}, whereas the errors and ROC generated by the explicit trapezoidal method (see \cite{Ascher}) and the first-order NSFD scheme (1stNSFD) \eqref{eq:2} are presented in Tables \ref{Table5}, and \ref{Table4}, respectively. Besides, the errors generated by the second-order NSFD and trapezoidal schemes with $h = 0.01$ are depicted in Figure \ref{Fig:7}.

It is clear that the constructed NSFD method is convergent of order $2$ as the trapezoidal method and they improves the first-order NSFD scheme \eqref{eq:2}. However, the errors produced by these methods are different. Although the trapezoidal method yields the smallest final-time error, its maximum and average errors are larger than those of the second-order schemes $(2.8970,\,4.5500)$-2ndNSFD and $(3.0000,\,5.000)$-2ndNSFD. Furthermore, Tables \ref{Table1}--\ref{Table3} illustrate the influence of $(\tau_1,\,\tau_2)$ on the errors of the second-order NSFD schemes. It is worth noting that the values of $\tau_1$ and $\tau_2$ in Tables \ref{Table1} and \ref{Table2} satisfy the dynamic consistency condition \eqref{eq:DC1new}, whereas those in Table \ref{Table3} do not. This highlights the importance of determining optimal weights to achieve the best error performance.

Figure \ref{fig:1} and \ref{fig:2} represent approximate solutions generated by the trapezoidal method with $h = 0.4$ and by the $(2.8970,\,4.5500)$-2ndNSFD scheme with some different values of $h$. We observe from these figures that the trapezoidal method fails to preserve the positivity and asymptotic stability of continuous-time model with $h = 0.4$. However, the second-order NSFD scheme preserves these properties regardless of the step sizes used. This is an advantage of NSFD schemes over standard ones, which has been demonstrated in previous studies \cite{Mickens1, Mickens2, Mickens3, Mickens4, Mickens5}.

Tables \ref{Table6} and \ref{Table7} report the errors and ROC of the third-order and fourth-order extrapolated $(2.8970,\,4.5500)$-2ndNSFD schemes, which are derived from \eqref{eq:3rdENSFD} and \eqref{eq:4thENSFD}, respectively. Clearly, the accuracy of the underlying second-order NSFD scheme can be easily improved by combining them with Richardson extrapolation technique. The similar observations can be found in previous studies \cite{GParra, Martin-Vaquero1, Martin-Vaquero2}.
\begin{table}[H]
\begin{center}
\caption{The errors and ROC of the $(2.8970,\,4.5500)$-2ndNSFD scheme}\label{Table1}
\begin{tabular}{cccccccccc}
\hline
$h$&$err_M$&$err_F$&$err_A$&ROC\\
\hline
$10^{-1}$&0.6116&0.6116&0.3172&\\
$10^{-2}$&0.0061&0.0047& 0.0032&2.1187\\
$10^{-3}$&5.4086e-005&4.8969e-005&3.0555e-005&1.9779\\
$10^{-4}$&5.3488e-007& 4.9190e-007&3.0472e-007&1.9980\\
$10^{-5}$&5.3416e-009&4.9223e-009&3.0465e-009&1.9997\\
$10^{-6}$&5.4541e-011&4.8480e-011&3.0413e-011&2.0066\\
\hline
\end{tabular}
\end{center}
\end{table}
\begin{table}[H]
\begin{center}
\caption{The errors and ROC of the $(3.0000,\,5.0000)$-2ndNSFD scheme}\label{Table2}
\begin{tabular}{cccccccccc}
\hline
$h$&$err_M$&$err_F$&$err_A$&ROC\\
\hline
$10^{-1}$&0.8237&0.8237&0.5280&\\
$10^{-2}$&0.0077&0.0066&0.0050&2.0952\\
$10^{-3}$&7.1013e-005&6.8590e-005&4.9663e-005&1.9843\\
$10^{-4}$&7.0431e-007&6.8812e-007&4.9653e-007& 1.9986\\
$10^{-5}$&7.0355e-009&6.8843e-009&4.9652e-009&1.9998\\
$10^{-6}$& 7.2887e-011&7.0392e-011& 5.1043e-011&1.9903\\
\hline
\end{tabular}
\end{center}
\end{table}
\begin{table}[H]
\begin{center}
\caption{The errors and ROC of the $(1.6000,\,2.0000)$-2ndNSFD scheme}\label{Table3}
\begin{tabular}{cccccccccc}
\hline
$h$&$err_M$&$err_F$&$err_A$&ROC\\
\hline
$10^{-1}$&2.2826&0.9630&1.5098&\\
$10^{-2}$&0.0149&0.0060&0.0107&2.2053\\
$10^{-3}$&1.4110e-004&5.6175e-005&1.0102e-004&2.0288\\
$10^{-4}$&1.4037e-006&5.5829e-007&1.0053e-006&2.0027\\
$10^{-5}$&1.4021e-008&5.5481e-009&1.0042e-008& 2.0027\\
$10^{-6}$&2.4276e-010&1.0126e-010&9.8906e-011&1.7387\\
\hline
\end{tabular}
\end{center}
\end{table}
\begin{table}[H]
\begin{center}
\caption{The errors and ROC of the trapezoidal method}\label{Table5}
\begin{tabular}{cccccccccc}
\hline
$h$&$err_M$&$err_F$&$err_A$&ROC\\
\hline
$10^{-1}$& 1.9686&0.2343&0.9915&\\
$10^{-2}$&0.0142&0.0018&0.0076&2.1225\\
$10^{-3}$&1.3795e-004&1.7306e-005&7.4680e-005&2.0091\\
$10^{-4}$&1.3752e-006&1.7270e-007& 7.4503e-007&2.0009 \\
$10^{-5}$&1.3744e-008&1.7265e-009&7.4475e-009&2.0001\\
$10^{-6}$&1.3673e-010&1.6443e-011&7.4537e-011&2.0212\\
\hline
\end{tabular}
\end{center}
\end{table}
\begin{table}[H]
\begin{center}
\caption{The errors and ROC of the 1stNSFD scheme \eqref{eq:2} with $\phi(h) = 1 - e^{-h}$}\label{Table4}
\begin{tabular}{cccccccccc}
\hline
$h$&$err_M$&$err_F$&ROC\\
\hline
$10^{-1}$& 12.9500& 10.3584&\\
$10^{-2}$&1.4375& 1.0513& 1.1828&0.9936\\
$10^{-3}$&0.1455&0.1052&0.1200&0.9995\\
$10^{-4}$&0.0146& 0.0105&0.0120&1.0000\\
$10^{-5}$&0.0015&0.0011&0.0012&1.0000\\
$10^{-6}$&1.4566e-004& 1.0525e-004&1.2016e-004&1.0000\\
\hline
\end{tabular}
\end{center}
\end{table}
\begin{table}[H]
\begin{center}
\caption{The errors and ROC of the third-order extrapolated $(2.8970,\,4.5500)$-2ndNSFD scheme}\label{Table6}
\begin{tabular}{cccccccccc}
\hline
$h$&$err_M$&$err_F$&ROC\\
\hline
$0.1$&0.1609&0.0822&\\
$0.05$&0.0129&0.0045&4.1968\\
$0.025$&0.0022&8.1494e-004& 2.4589\\
$0.01$&1.2662e-004&4.6659e-005&3.1215 \\
$0.005$&1.4920e-005&5.4508e-006&3.0976  \\
$0.0025$&1.8052e-006&6.5613e-007&3.0544\\
$0.001$& 1.1321e-007&4.1012e-008&3.0258\\
$0.0001$&1.1403e-010&4.0616e-011&3.0042\\
\hline
\end{tabular}
\end{center}
\end{table}
\begin{table}[H]
\begin{center}
\caption{The errors and ROC of the fourth-order extrapolated $(2.8970,\,4.5500)$-2ndNSFD scheme}\label{Table7}
\begin{tabular}{cccccccccc}
\hline
$h$&$err_M$&$err_F$&ROC\\
\hline
$0.1$&0.0375&0.0169\\
$0.05$&6.5445e-004&2.9127e-004&5.8550 \\
$0.025$& 2.5011e-005&9.1939e-006&4.9855\\
$0.01$&1.0853e-006&4.3609e-007&3.3269\\
$0.005$&7.1040e-008&2.8827e-008& 3.9191 \\
$0.0025$&4.4774e-009&1.8238e-009& 3.9824\\
$0.001$&1.1607e-010&4.6278e-011&4.0097\\
$0.0001$&5.8265e-012&1.5898e-013&2.4640\\
\hline
\end{tabular}
\end{center}
\end{table}
\begin{figure}[H]
\centering
\includegraphics[height=10.0cm,width=16cm]{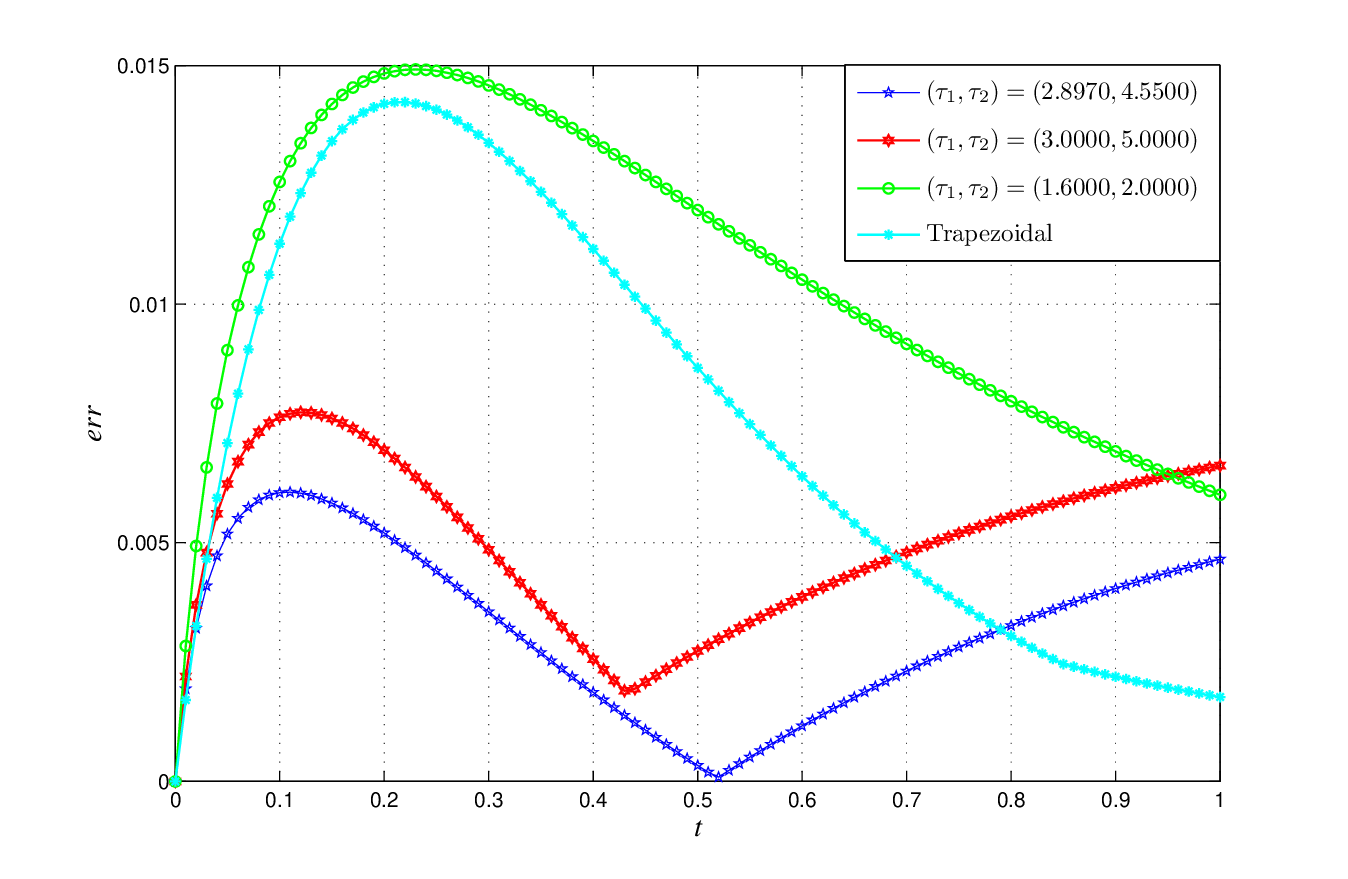}
\caption{The errors provided by the second-order NSFD and trapezoidal methods}\label{Fig:7}
\end{figure}
\begin{figure}[H]
\subfloat[$x$-component]{%
\includegraphics[height=10cm,width=15cm]{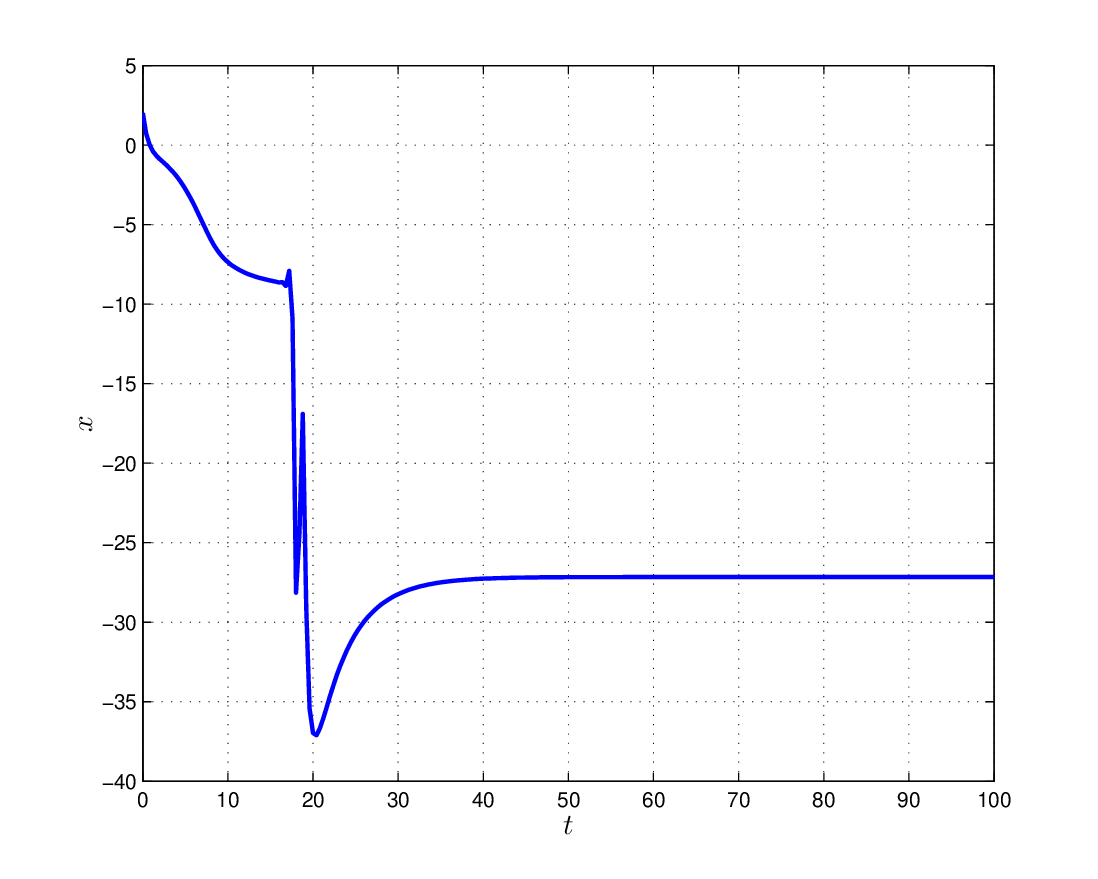}
\label{Figure:1a}
}\hfill
\subfloat[$y$-component]{%
\includegraphics[height=10cm,width=15cm]{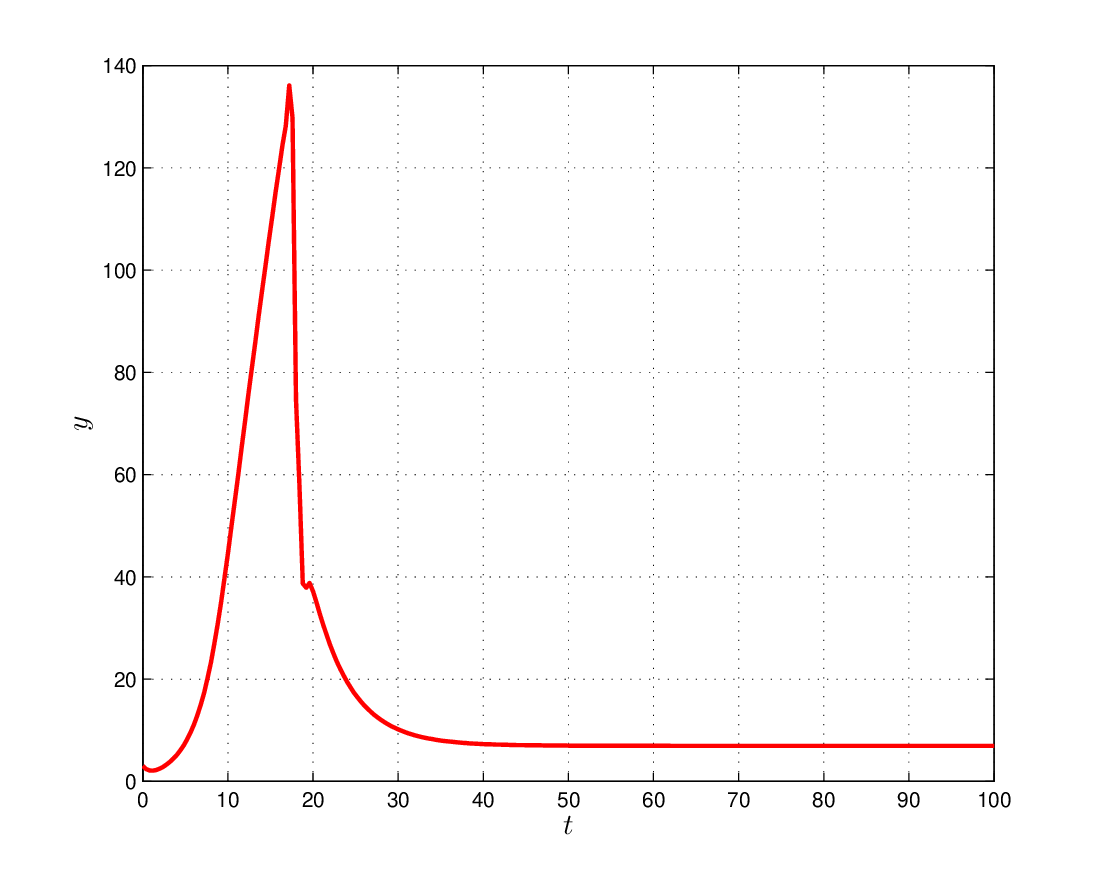}
\label{Figure:1b}
}\hfill
\caption{The approximate solution generated by the trapezoidal method with $h = 0.4$}\label{fig:1}
\end{figure}

\begin{figure}[H]
\subfloat[$x$-component]{%
\includegraphics[height=10cm,width=15cm]{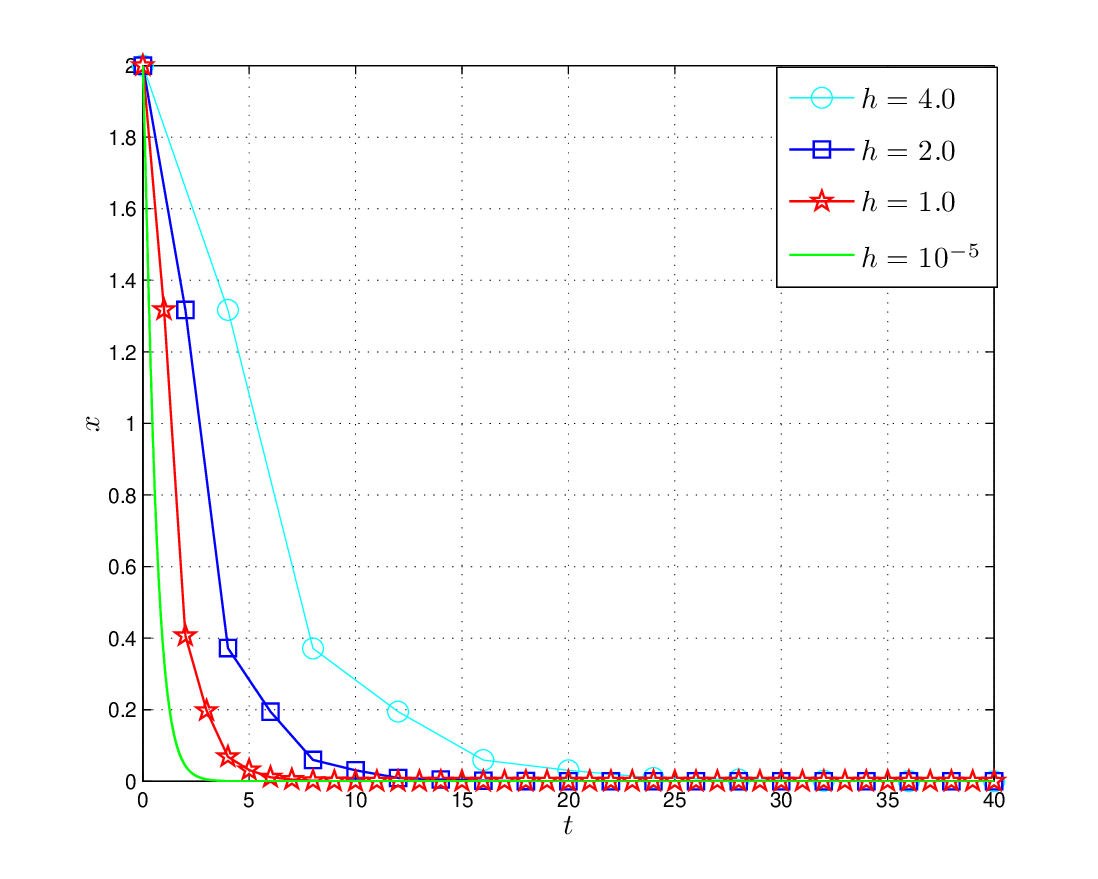}
\label{Figure:2a}
}\hfill
\subfloat[$y$-component]{%
\includegraphics[height=10cm,width=15cm]{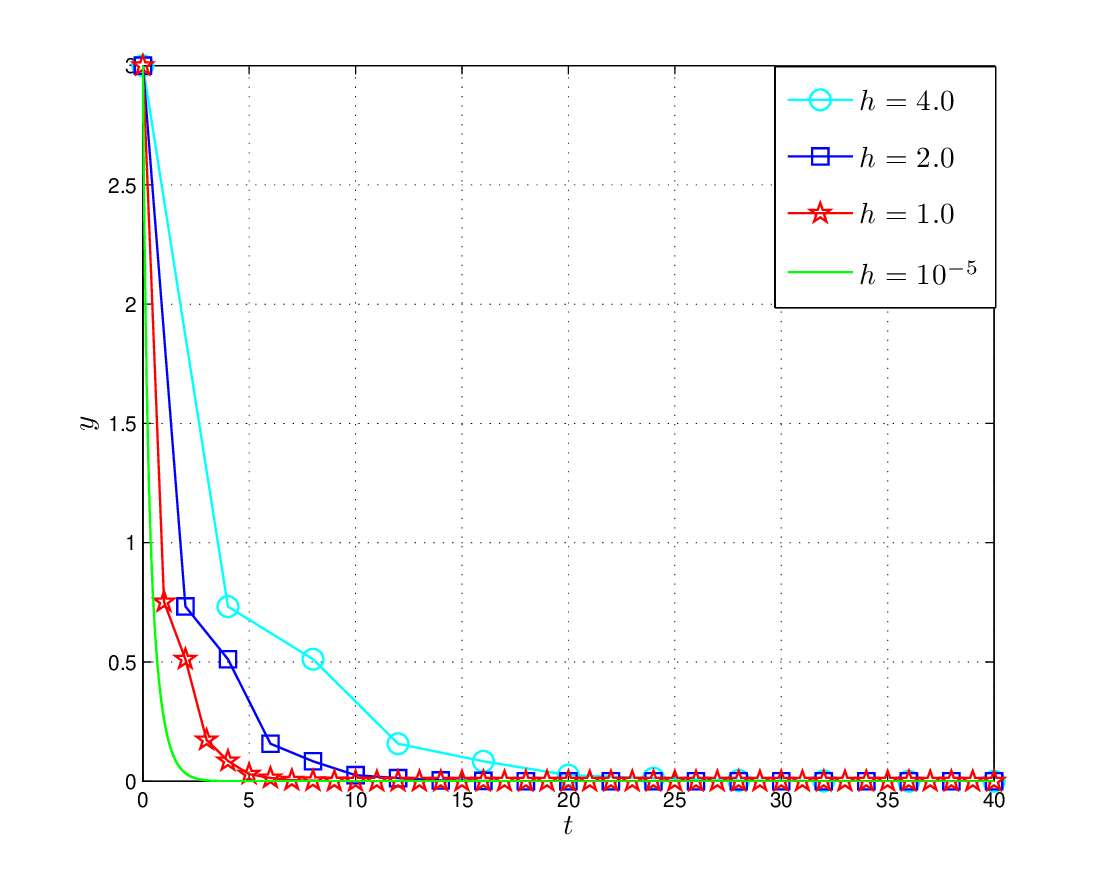}
\label{Figure:2b}
}\hfill
\caption{The approximate solution generated by the $(2.8970,\,4.5500)$-2ndNSFD scheme method with some different values of $h$}\label{fig:2}
\end{figure}
\end{example}
\begin{example}[Numerical dynamics of the second-order NSFD scheme]
In this example, we implement the NSFD scheme \eqref{eq:NSFD} to simulate the behaviour of the continuous-time model \eqref{eq:1} over long time periods. For this purpose, we consider \eqref{eq:1} with the parameters given in Table \ref{Table8}. Note that the values of $\tau_1$ and $\tau_2$ are easily determined from \eqref{eq:DC1new} and \eqref{eq:DC2new}.
\begin{table}[H]
\begin{center}
\caption{The parameters used in numerical simulation}\label{Table8}
\begin{tabular}{cccccccccc}
\hline
Set&$\alpha$&$\beta$&$\delta$&$\mu$&$F$&Source&$\mathcal{R}_0$&Stable equilibrium&$(\tau_1,\,\tau_2)$\\
\hline
$1$&20&60&14.6&0.897&3.6530&\cite{Ladino}&0.5178&$E_T^* = (0,\,0)$&$(1.2303,\,4.5500)$\\
$2$&20&60&14.6&0.63&0.75&\cite{Ladino}&8.6897&$E_P^* = (244.1178,\, 11.6334)$&$(0.9633,\, 1.3800)$\\
\hline
\end{tabular}
\end{center}
\end{table}
Approximate solutions obtained by employing \eqref{eq:NSFD} with $h = 0.0001$ are given in Figure \ref{fig:3}. It is clear that the results shown in this figure are consistent with the mathematical analysis presented in \cite{Ladino}. Therefore, the constructed NSFD scheme is simple and effective for simulating the dynamics of the continuous-time model over long time intervals, thanks to its dynamical consistency.
\begin{figure}[H]
\subfloat[Parameter Set 1]{%
\includegraphics[height=10cm,width=15cm]{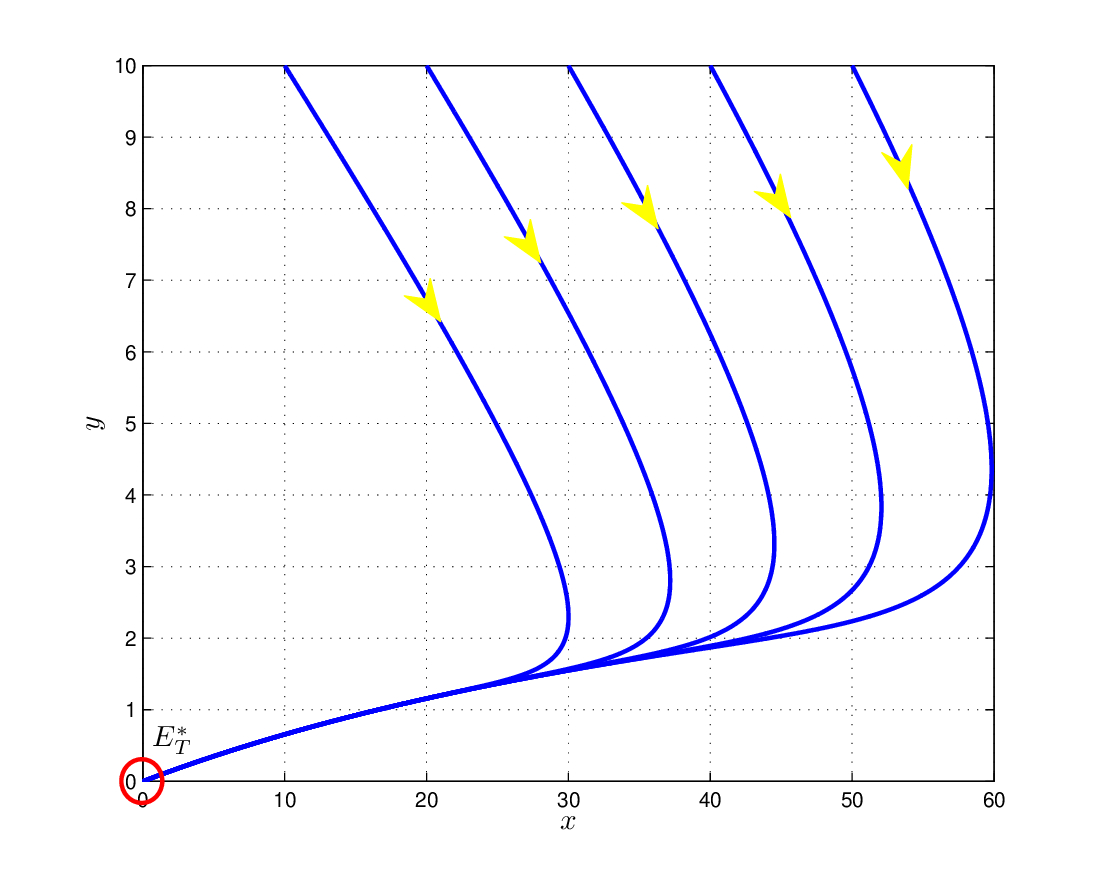}
\label{Figure:1a}
}\hfill
\subfloat[Parameter Set 2]{%
\includegraphics[height=10cm,width=15cm]{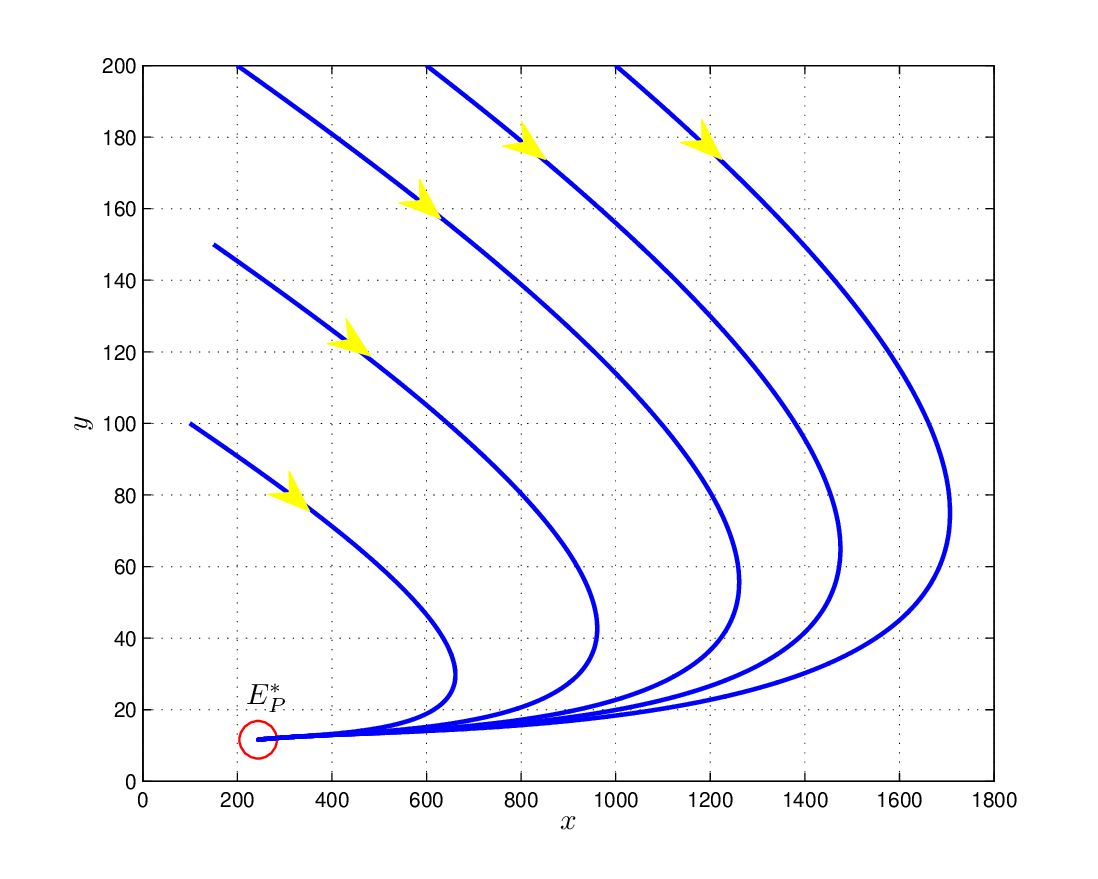}
\label{Figure:1b}
}\hfill
\caption{The phase planes of the two-stage structured species model generated by the second-order NSFD scheme}\label{fig:3}
\end{figure}
\end{example}
\section{Concluding remarks and discussions}
As the main conclusion of this work, we have constructed a simple second-order NSFD method, which adopts the approach in \cite{Hoang2023}, for a class of autonomous dynamical systems modeling various important phenomena and processes encountered in real-world situations. This method simultaneously preserves two properties of the continuous-time models for any finite step sizes, namely, the positivity of solutions, the set of equilibrium points and their asymptotic stability. The constructed NSFD method includes nonstandard denominator functions and a weighted discretization of the right-hand side functions. Here, the denominator functions guarantee second-order convergence and the weights ensure the dynamic consistency of the NSFD method. By taking the specific structure of the right-hand side functions, we have employed a simple discretization rather than the nonlocal approaches commonly used in previous studies. This simplifies the construction of the proposed NSFD method and facilitates the analysis of its asymptotic stability. Also, the second-order NSFD method can be readily combined with the Richardson extrapolation technique to improve its accuracy

As an illustration and an important application, we have applied the constructed second-order NSFD method to a well-known two-stage structured species model with recruitment, which was first proposed in \cite{Ladino}. Consequently, we derive a simple second-order NSFD scheme that improves upon a first-order NSFD scheme developed in \cite{HoangValverde}. Numerical experiments have been conducted to demonstrate the advantages of the NSFD scheme over a standard second-order method, namely, the explicit trapezoidal method.

The proposed approach is simple and can be applied to a broad class of dynamical system models arising in both theory and applications. In the near future, we will develop this approach to construct higher-order NSFD methods for partial differential equations and fractional-order differential equations.\\
\textbf{Ethical Approval:} Not applicable.\\
\textbf{Availability of supporting data:} The data supporting the findings of this study are available within the article [and/or] its supplementary materials.\\
\textbf{Conflicts of Interest:} The author declares no conflicts of interest to disclose.\\
\textbf{Authors' contributions:} Manh Tuan Hoang:  Writing review \& editing, Writing original draft, Visualization, Validation, Supervision, Software, Resources, Project administration, Methodology, Investigation,
Formal analysis, Data curation, Conceptualization, Funding acquisition.\\
\textbf{Funding information:} Not available.
\bibliographystyle{amsalpha}

\end{document}